\numberwithin{equation}{section}
\def\Re{{\sf Re}\,}
\def\Im{{\sf Im}\,}
\newcommand{\D}{\mathbb D}
\newcommand{\de}{\partial}
\newcommand{\R}{\mathbb R}
\newcommand{\Ha}{\mathbb H}
\newcommand{\C}{\mathbb C}
\newcommand{\N}{\mathbb N}
\def\Re{{\sf Re}\,}
\def\Im{{\sf Im}\,}
\newcommand{\strip}{\mathbb{S}}
\newcommand{\UD}{\mathbb{D}}
\def\Re{{\sf Re}\,}
\def\Im{{\sf Im}\,}
\def\Re{{\sf Re}\,}
\def\Im{{\sf Im}\,}
\def\1#1{\overline{#1}}
\def\2#1{\widetilde{#1}}
\def\3#1{\widehat{#1}}
\def\4#1{\mathbb{#1}}
\def\5#1{\frak{#1}}
\def\6#1{{\mathcal{#1}}}
\def\Re{{\sf Re}\,}
\def\Im{{\sf Im}\,}
\newcommand{\mcite}[1]{\csname b@#1\endcsname}
\theoremstyle{theorem}
\def\Re{{\sf Re}\,}
\def\Im{{\sf Im}\,}
\newtheorem{theorem}{Theorem}[section]
\newtheorem{lemma}[theorem]{Lemma}
\newtheorem{proposition}[theorem]{Proposition}
\newtheorem{corollary}[theorem]{Corollary}
\theoremstyle{definition}
\newtheorem{definition}[theorem]{Definition}
\theoremstyle{remark}
\newtheorem{remark}[theorem]{Remark}
\numberwithin{equation}{section}
\begin{document}
\title[On the K\"onigs function]{On the K\"onigs function of semigroups of holomorphic self-maps of the unit disc}
\date{\today}
\subjclass[2010]{Primary 37C10, 30C35; Secondary 30D05, 30C80,
37F99, 37C25} \keywords{Semigroups of holomorphic functions; }
\thanks{$^\dag$ Partially supported by the \textit{Ministerio
de Econom\'{\i}a y Competitividad} and the European Union (FEDER), project MTM2015-63699-P, and  by \textit{La Consejer\'{\i}a de Econom\'{\i}a y Competitividad de la Junta de Andaluc\'{\i}a}.}

\author[F. Bracci]{Filippo Bracci}
\address{F. Bracci: Dipartimento di Matematica, Universit\`a di Roma ``Tor Vergata", Via della Ricerca
Scientifica 1, 00133, Roma, Italia.} \email{fbracci@mat.uniroma2.it}

\author[M. D. Contreras]{Manuel D. Contreras$^\dag$}

\author[S. D\'{\i}az-Madrigal]{Santiago D\'{\i}az-Madrigal$^\dag$}
\address{M. D. Contreras, S. D\'{\i}az-Madrigal: Camino de los Descubrimientos, s/n\\
Departamento de Matem\'{a}tica Aplicada~II and IMUS\\ Universidad de Sevilla\\ Sevilla,
41092\\ Spain.}\email{contreras@us.es} \email{madrigal@us.es}

\dedicatory{In Memory of our beloved friend Sasha Vasil'ev}

\begin{abstract} Let $(\phi_t)$ be a semigroup of holomorphic self-maps of~$\UD$. In this note, we use an abstract approach to define the K\"onigs function of $(\phi_t)$ and ``holomorphic models'' and show how to deduce the existence and properties of the infinitesimal generator of $(\phi_t)$ from this construction.
\end{abstract}
\maketitle

\section{Introduction}
Let $\Omega$ be a Riemann surface. A (one-parameter) semigroup $(\phi_t)$ of holomorphic self-maps of~$\Omega$ is a continuous homomorphism $t\mapsto \phi_t$ from the
additive semigroup $(\R_{\ge0}, +)$ of non-negative real numbers to the
semigroup $({\sf Hol}(\Omega,\Omega),\circ)$ of  holomorphic self-maps
of $\Omega$ with respect to composition, endowed with the
topology of uniform convergence on compacta. If $\phi_t(\Omega)=\Omega$ for some---and hence for all---$t>0$, every $\phi_t$ is an automorphism of $\Omega$ and $(\phi_t)$ will be called a group. 

Let $(\phi_t)$ be a semigroup in $\D:=\{\zeta\in \C: |\zeta|<1\}$, the unit disc of $\C$. It is well known that  $\phi_{t_0}$ has a fixed point in $\D$ for some $t_0>0$ if and only if there exists $x\in \D$ such that $\phi_t(x)=x$  for all $t\geq 0$. In such a case, the semigroup $(\phi_t)$ is called {\sl elliptic}. Moreover, there exists $\lambda\in \C$ with $\Re \lambda\geq 0$ such that $\phi_t'(x)=e^{-\lambda t}$ for all $t\geq 0$. This number $\lambda$ is called the {\sl spectral value} of the elliptic semigroup. If $(\phi_t)$ is not the trivial (semi)group $\phi_t(z)\equiv z$ for all $t\geq 0$, then $\lambda\neq0$ and $x$ is the unique fixed point of $(\phi_t)$ and, in this case, $x$ is called the {\sl Denjoy-Wolff point} of the semigroup. If $(\phi_t)$ is a non-trivial elliptic semigroup with spectral value $\lambda$, then $(\phi_t)$ is a group if and only if $\Re \lambda=0$.

If the semigroup $(\phi_t)$ is not elliptic, then there exists a unique $x\in \de \D$ such that there is $\lambda\geq 0$ with
\[
\liminf_{z\to x} \frac{1-|\phi_t(z)|}{1-|z|}=e^{-\lambda t},\quad t\geq 0.
\]
The point $x$ is called the Denjoy-Wolff point of the semigroup and $\lambda$  the {\sl spectral value} of the non-elliptic semigroup.
Moreover, this semigroup is said {\sl hyperbolic} if the spectral value is non-zero, while it is said {\sl parabolic} if the spectral value is zero.

From a dynamical point of view, the basic result about semigroups is the so called continuous Denjoy-Wolff theorem. This result says that if the semigroup is  elliptic with $\Re\lambda >0$ or it is non-elliptic and, in both cases, $\tau\in\overline{\D}$ is the Denjoy-Wolff point of the semigroup, it holds
$$
\lim_{t\to+\infty}\phi_t(z)=\tau,
$$
for all $z\in\D$.

The two main objects associated to a semigroup are the infinitesimal generator and the K\"onigs function. The {\sl infinitesimal generator} $G$ of a semigroup $(\phi_t)$ in $\D$ is a holomorphic function $G:\D\to \C$ such that
\[
\frac{\de \phi_t(z)}{\de
t}=G(\phi_t(z))\qquad\text{for all $z\in\UD$ and all~$t\ge0$}.
\]
A very famous result due to Berkson and Porta \cite{BerPor78}  characterizes those holomorphic functions which are infinitesimal generators of a semigroup in $\D$:

\begin{theorem} A holomorphic function $G:\D\to \C$ is the infinitesimal generator of a semigroup if and only if there exist a point
$\tau\in\overline{\D}$ and a holomorphic function $p:\D\to \C$  with $\Re
p(z)\geq 0$ for all $z\in \D$ such that the following formula holds
\begin{equation}\label{Eq:Berkson-Porta formula}
G(z)=(z-\tau)(\overline{\tau}z-1)p(z),\quad z\in\D.
\end{equation}
\end{theorem}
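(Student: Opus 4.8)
The plan is to prove both implications, writing $Q(z):=(z-\tau)(\overline{\tau}z-1)$ so that the Berkson--Porta formula reads $G=Q\cdot p$ with $\Re p\ge 0$.

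For \emph{sufficiency}, I would start from the boundary behaviour of $G$. A direct computation shows that on $\de\D$, where $\overline{z}=1/z$, one has $\overline{z}\,Q(z)=-|1-\overline{\tau}z|^{2}$, whence
\[
\Re\big(\overline{z}\,G(z)\big)=-\,|1-\overline{\tau}z|^{2}\,\Re p(z)\le 0,\qquad |z|=1.
\]
Since $\tfrac{d}{dt}|w(t)|^{2}=2\Re\big(\overline{w(t)}\,G(w(t))\big)$ along any solution of $\dot w=G(w)$, this sign condition says the field is inward-pointing at the boundary, so the open disc is forward-invariant. Local solutions exist by holomorphy of $G$, and forward-invariance upgrades them to solutions defined for all $t\ge 0$ that remain in $\D$; the resulting flow $(\phi_t)$ is then a semigroup of holomorphic self-maps whose generator is $G$.

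For \emph{necessity}, let $\tau\in\oD$ be the Denjoy--Wolff point of $(\phi_t)$ and set $p:=G/Q$. First I would check $p\in\Hol(\D,\C)$. In the elliptic case $\tau\in\D$ one has $\phi_t(\tau)=\tau$, so differentiating at $t=0$ gives $G(\tau)=0$; as the second zero $1/\overline{\tau}$ of $Q$ lies outside $\oD$, the only zero of $Q$ in $\D$ is the simple zero at $\tau$, cancelled by that of $G$. In the non-elliptic case $|\tau|=1$ and $Q(z)=\overline{\tau}(z-\tau)^{2}$ vanishes only on $\de\D$, so $Q$ is zero-free on $\D$. Either way $p$ is holomorphic, and it remains to show $\Re p\ge 0$, equivalently $\Re\big(\overline{Q}\,G\big)=|Q|^{2}\Re p\ge 0$.

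The heart of the argument is to produce a real function that decreases along trajectories and whose $z$-derivative is a positive multiple of $-\overline{Q}$. In the elliptic case I would use the squared pseudo-hyperbolic quantity $P(z)=\big|(z-\tau)/(1-\overline{\tau}z)\big|^{2}$: by the Schwarz--Pick inequality $P(\phi_t(z))\le P(z)$ (it is the squared pseudo-hyperbolic distance from $\phi_t(z)$ to $\phi_t(\tau)=\tau$), and a short computation gives $\de_z P=-(1-|\tau|^{2})\,\overline{Q}/|1-\overline{\tau}z|^{4}$. In the non-elliptic case I would use the horocyclic function $H(z)=|\tau-z|^{2}/(1-|z|^{2})$, which Julia's lemma forces to be non-increasing along $(\phi_t)$ precisely because $\tau$ is the Denjoy--Wolff point, and here $\de_z H=-\overline{Q}/(1-|z|^{2})^{2}$. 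In both cases $0\ge \tfrac{d}{dt}F(\phi_t(z))\big|_{t=0}=2\Re(\de_z F\cdot G)$ reduces, after dividing by the positive factor, to $\Re(\overline{Q}\,G)\ge 0$, that is $\Re p\ge 0$.

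I expect the main obstacle to lie in the \emph{sufficiency} direction: the boundary estimate above is only non-strict, so it proves invariance of $\oD$ but does not by itself forbid a trajectory from reaching $\de\D$ in finite time. Turning the inward-pointing condition into genuine forward-completeness inside the \emph{open} disc is the delicate step, requiring either a Nagumo/subtangentiality argument adapted to the open domain or a Gronwall-type estimate (for instance controlling $H$ or $P$ along the flow) in the spirit of Berkson and Porta's original construction.
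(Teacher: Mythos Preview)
Your argument is essentially sound --- the necessity direction via the Schwarz--Pick/Julia Lyapunov functions is correct and complete, and you have honestly flagged the genuine analytic difficulty in the sufficiency direction (turning the non-strict boundary inequality into forward completeness in the \emph{open} disc). But your route is entirely different from the paper's.

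The paper does not prove the full Berkson--Porta theorem; it quotes it in the introduction as a known result and then, in the final section, recovers only the \emph{necessity} direction for \emph{non-elliptic} semigroups as a byproduct of the K\"onigs/model machinery developed throughout. Concretely, the paper first constructs the K\"onigs function $h$ via holomorphic models, shows that $h$ is starlike at infinity with respect to the Denjoy--Wolff point $\tau$, proves the analytic characterisation $\Im[\overline{\tau}(\tau-z)^{2}h'(z)]\ge 0$ of such functions (Theorem~\ref{Thm:starlike-infinity}), and finally observes that $G=i/h'$, so that
\[
\Re p(z)=\frac{\Im\big(h'(z)(z-\tau)(\overline{\tau}z-1)\big)}{|h'(z)(z-\tau)(\overline{\tau}z-1)|^{2}}\ge 0.
\]
In other words, the positivity of $\Re p$ is read off from the geometry of $h(\D)$, not from differentiating a horocycle functional along the flow.

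What each approach buys: your argument is self-contained, treats the elliptic and non-elliptic cases uniformly, and needs nothing beyond Schwarz--Pick and Julia; it is close in spirit to the original Berkson--Porta proof. The paper's argument is deliberately indirect --- its whole point is to show that the Berkson--Porta decomposition (and the very existence of the generator) falls out of the K\"onigs linearisation, so that the model determines the dynamics. If you are asked to reproduce \emph{this paper's} proof, you should go through $h$ and Theorem~\ref{Thm:starlike-infinity}; if you just want a proof of the theorem, yours is perfectly good for necessity, and for sufficiency you would indeed need the additional Gronwall/Nagumo step you anticipate.
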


On the other hand, the K\"onigs function of a semigroup $(\phi_t)$ is a univalent function on $\D$ which ``simultaneously linearizes'' the $\phi_t$'s. More precisely, if the semigroup is elliptic with spectral value $\lambda\neq0$, there exists an essentially unique univalent function $h:\D\to\C$ such that $h\circ\phi_t=e^{-\lambda t}h$, for $t\geq 0$. Likewise, if the semigroup is non-elliptic, there exists an essentially unique univalente function $h:\D\to\C$ such that $h\circ\phi_t=h+it$, for $t\geq 0$.

The usual way (see for instance \cite{Ababook89}, \cite{Sis85}) to define these two elements for a general semigroup is to generate firstly the infinitesimal generator solving the associated Cauchy problem and defining then the K\"onigs function using properties of the infinitesimal generator. This point of view, which is very useful to get analytic information on the K\"onigs function, has however a main drawback: it is not clear whether all ``linear models'' one can obtain are essentially the same or can be factorize through the K\"onigs function. 

The aim of this note is to define first the K\"onigs function of a semigroup by using a categorial construction which was first partially introduced in \cite{Co} and completely developed in \cite{AroBra16}, and then to show how to recover the existence of infinitesimal generators and the Berkson-Porta formula from this. This point of view allows to obtain immediately the uniqueness and functorial properties for every linearization model.  

For the sake of clearness, we are going to restrict ourselves to the (more delicate) non-elliptic case. Nevertheless, our arguments can be easily adapted to the elliptic case.
As one might expect this approach leads to  new proofs of several well-known results. At this respect, we want to underline our proof of Theorem \ref{Thm:starlike-infinity} and the deduction of the Berkson and Porta's decomposition theorem.

\section{The divergence rate and hyperbolic steps}

Given a Riemann surface  $\Omega$, we denote by $k_\Omega (z,w)$, $z,w\in \Omega$, the {\sl hyperbolic distance} of $z$ and $w$ in $\Omega$. We simply write $\omega:=k_\D$.

We recall here some results from \cite[Section 2]{AroBra16}. For the sake of completeness, we sketch some proofs.
Our first step is to introduce a quantity, called {\sl divergence rate}, which, roughly speaking, measures the average hyperbolic speed of escape of an orbit of a semigroup.
\begin{lemma}\label{Lem:def-hyper-step}
Let $(\phi_t)$ be a continuous semigroup on a Riemann surface $\Omega$. Then for all $z\in \Omega$ the limit
\begin{equation}\label{Eq:def-speed-of-divergence}
c_\Omega(\phi_t):=\lim_{s\to+\infty}\frac{k_\Omega(\phi_s(z), z)}{s}
\end{equation}
exists independently of $z$, $c_\Omega(\phi_t)\in [0,+\infty)$ and moreover
\begin{equation}\label{Eq:speed-of-divergence}
c_\Omega(\phi_t)=\inf_{s> 0}\frac{k_\Omega(\phi_s(z), z)}{s}.
\end{equation}
This number $c_\Omega(\phi_t)$ is called the {\sl divergence rate of $(\phi_t)$}.
\end{lemma}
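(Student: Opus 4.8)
The plan is to recognize the map $s\mapsto k_\Omega(\phi_s(z),z)$ as a subadditive function and to apply the continuous version of Fekete's subadditivity lemma. First I would fix $z\in\Omega$ and set $a(s):=k_\Omega(\phi_s(z),z)$ for $s\ge 0$. The crucial structural input is that holomorphic self-maps do not increase the hyperbolic distance (Schwarz--Pick), so that $k_\Omega(\phi_s(p),\phi_s(q))\le k_\Omega(p,q)$ for all $p,q\in\Omega$. Combining this with the semigroup relation $\phi_{s+t}=\phi_s\circ\phi_t$ and the triangle inequality gives
\[
a(s+t)=k_\Omega(\phi_s(\phi_t(z)),z)\le k_\Omega(\phi_s(\phi_t(z)),\phi_s(z))+k_\Omega(\phi_s(z),z)\le a(t)+a(s),
\]
so that $a$ is subadditive on $[0,+\infty)$. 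Since $t\mapsto\phi_t(z)$ is continuous by the very definition of a semigroup, $a$ is continuous on $[0,+\infty)$ with $a(0)=0$, and in particular bounded on every compact interval.

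Next I would invoke the continuous Fekete lemma: for a nonnegative subadditive function that is bounded on compacta, $\lim_{s\to+\infty}a(s)/s$ exists and equals $\inf_{s>0}a(s)/s$. For completeness one argues directly: given $s_0>0$ and $s>s_0$, write $s=n s_0+r$ with $n\in\N$ and $0\le r<s_0$; subadditivity yields $a(s)\le n\,a(s_0)+a(r)$, and dividing by $s$ and letting $s\to+\infty$ (so $n\to\infty$, using $\sup_{[0,s_0]}a<+\infty$ to absorb $a(r)$) gives $\limsup_{s\to+\infty}a(s)/s\le a(s_0)/s_0$. Taking the infimum over $s_0$ and comparing with the trivial bound $\liminf_{s\to+\infty}a(s)/s\ge\inf_{s>0}a(s)/s$ establishes both the existence of the limit and formula \eqref{Eq:speed-of-divergence}. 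Finiteness, i.e. $c_\Omega(\phi_t)\in[0,+\infty)$, is then immediate, since $a\ge 0$ forces $c_\Omega(\phi_t)\ge 0$, while $\inf_{s>0}a(s)/s\le a(1)<+\infty$.

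Finally I would check independence of the base point. For $z,w\in\Omega$, two applications of the triangle inequality together with the distance-decreasing property give
\[
k_\Omega(\phi_s(w),w)\le k_\Omega(\phi_s(w),\phi_s(z))+k_\Omega(\phi_s(z),z)+k_\Omega(z,w)\le k_\Omega(\phi_s(z),z)+2k_\Omega(z,w),
\]
and symmetrically with $z$ and $w$ interchanged. Dividing by $s$ and letting $s\to+\infty$, the term $2k_\Omega(z,w)/s$ vanishes, so the two limits coincide. The whole argument is essentially routine once the distance-decreasing property is in hand; the only point requiring mild care, and the one I regard as the main (minor) obstacle, is the passage from the discrete to the continuous Fekete lemma, namely controlling the remainder $a(r)$ uniformly, which is precisely what the local boundedness of $a$ coming from continuity of the semigroup provides.
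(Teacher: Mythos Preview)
Your proof is correct and follows essentially the same approach as the paper: establish subadditivity of $s\mapsto k_\Omega(\phi_s(z),z)$ via the triangle inequality and the Schwarz--Pick contraction property, apply Fekete's lemma, and then show independence of the base point by the same two ingredients. The only difference is that you spell out the continuous Fekete argument in more detail than the paper, which simply cites it.
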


\begin{proof}
Fix $z\in \Omega$. For $s,t\geq 0$, using the triangle inequality and the contractiveness property of $k_\Omega$ under holomorphic maps, it holds
\begin{equation*}
\begin{split}
k_\Omega(z,\phi_{t+s}(z))&\leq k_\Omega(z,\phi_t(z))+ k_\Omega(\phi_t(z),\phi_{t+s}(z))\\& = k_\Omega(z,\phi_t(z))+ k_\Omega(\phi_t(z),\phi_t(\phi_s((z))))\leq k_\Omega(z,\phi_t(z))+  k_\Omega(z,\phi_s(z)).
\end{split}
\end{equation*}
Hence, the function $[0,+\infty)\ni t\mapsto  k_\Omega(z,\phi_t(z))$ is a non-negative continuous subadditive function. By Fekete's Theorem, the limit
\[
c(z):=\lim_{t\to+\infty}\frac{k_\Omega(\phi_t(z), z)}{t}=\inf_{t> 0}\frac{k_\Omega(\phi_t(z), z)}{t}
\]
exists finitely.

It remains to show that $c(z)$ is independent of $z\in \Omega$.  To this aim, let $w\in \Omega$ be another point. Then, using again the triangle inequality and the contractiveness of the hyperbolic distance, we have
\begin{equation*}
\begin{split}
k_\Omega(z,\phi_t(z))&\leq  k_\Omega(z,w)+k_\Omega(w,\phi_t(w))+k_\Omega(\phi_t(z), \phi_t(w))\\&\leq k_\Omega(w,\phi_t(w))+2k_\Omega(z,w).
\end{split}
\end{equation*}
Therefore, dividing by $t$ and taking the limit as $t\to \infty$, we see that $c(z)\leq c(w)$. Changing $z$ with $w$ and repeating the previous argument we get $c(z)=c(w)$.
\end{proof}
\begin{remark}\label{Rem:speed-elliptic}
It is clear from the definition that if $(\phi_t)$ is a continuous semigroup in $\Omega$ such that there exists $z\in\Omega$ with $\phi_t(z)=z$ for all $t\geq 0$, then the divergence rate of $(\phi_t)$ is $c_\Omega(\phi_t)=0$. In particular, elliptic semigroups in $\D$ have zero divergence rate.
\end{remark}

We introduce now another measure of ``hyperbolicity'' of a semigroup on a Riemann surface $\Omega$, and relate it to the divergence rate.

Let $(\phi_t)$ be a continuous semigroup on a Riemann surface $\Omega$. Note that for $r\geq r'\geq 0$,
\begin{equation*}
\begin{split}
k_\Omega(\phi_r(z), \phi_{r+u}(z))&=k_\Omega(\phi_r(z), \phi_{r}(\phi_u(z)))=
k_\Omega(\phi_{r-r'}(\phi_{r'}(z)), \phi_{r-r'}(\phi_{r'}(\phi_u(z))))\\&
\leq k_\Omega(\phi_{r'}(z), \phi_{r'}(\phi_u(z)))=k_\Omega(\phi_{r'}(z), \phi_{r'+u}(z)).
\end{split}
\end{equation*}
Hence, the function $r\mapsto k_\Omega(\phi_r(z), \phi_{r+u}(z))$ is decreasing in $r$ and therefore the limit exists. Taking this into account, we can define the hyperbolic step of a semigroup:

\begin{definition}
Let $(\phi_t)$ be a continuous semigroup on a Riemann surface $\Omega$. Let $u\geq 0$. The {\sl hyperbolic  step of order $u$ } (or {\sl $u$-th hyperbolic step}) of $(\phi_t)$ at $z\in \Omega$ \index{Hyperbolic steps of order $u$} is defined as
\[
s_u(\phi_t,z):=\lim_{r\to +\infty}k_\Omega(\phi_r(z), \phi_{r+u}(z)).
\]
The $1$-st hyperbolic step $s_1(\phi_t, z)$ is just called  {\sl hyperbolic step}.
\end{definition}

\begin{remark}\label{Rem:step-elliptic}
If $(\phi_t)$ is a continuous group of automorphisms of a Riemann surface $\Omega$, it holds $s_u(\phi_t, z)=k_\Omega(z,\phi_u(z))$.

On the other hand, if $(\phi_t)$ is an elliptic semigroup, not a group, in $\D$ then $s_u(\phi_t,z)\equiv 0$ for all $z\in \D$ and $u\geq 0$. Indeed, if $\tau\in \D$ is the Denjoy-Wolff point of $(\phi_t)$ then $\phi_t(\tau)=\tau$ for all $t\geq 0$ and hence $s_u(\phi_t,\tau)= 0$. If $z\in \D$ is not the Denjoy-Wolff point of $(\phi_t)$, then by the continuous Denjoy-Wolff theorem, it holds $\lim_{t\to +\infty}\omega(\phi_t(z), \phi_{t+u}(z))=\omega(\tau,\tau)=0$, hence $s_u(\phi_t,z)= 0$.
\end{remark}

\begin{proposition}\label{Prop:higher-step-speed}
Let $(\phi_t)$ be a continuous semigroup on a Riemann surface $\Omega$. Then for all $z\in \Omega$ it holds
\[
c_\Omega(\phi_t)=\lim_{u\to +\infty}\frac{s_u(\phi_t,z)}{u}.
\]
\end{proposition}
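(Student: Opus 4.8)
The plan is to trap the ratio $s_u(\phi_t,z)/u$ between two quantities that both converge to the divergence rate as $u\to+\infty$, and then pass to the limit by a squeeze. Fix $z\in\Omega$ and abbreviate the sampled increments along the orbit by $a_j:=k_\Omega(\phi_{ju}(z),\phi_{(j+1)u}(z))$, $j\ge0$. By the monotonicity computation performed just before the definition of the hyperbolic step, for each fixed $u$ the map $r\mapsto k_\Omega(\phi_r(z),\phi_{r+u}(z))$ is decreasing; in particular $(a_j)_j$ is non-increasing and $a_j\to s_u(\phi_t,z)$ as $j\to+\infty$. The independence from $z$ is not an issue, since $c_\Omega(\phi_t)$ is already independent of $z$ by Lemma~\ref{Lem:def-hyper-step}.

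For the upper bound I would evaluate the decreasing function $r\mapsto k_\Omega(\phi_r(z),\phi_{r+u}(z))$ at $r=0$ to get $s_u(\phi_t,z)\le k_\Omega(z,\phi_u(z))$; dividing by $u$ and letting $u\to+\infty$, Lemma~\ref{Lem:def-hyper-step} gives $\limsup_{u\to+\infty} s_u(\phi_t,z)/u\le c_\Omega(\phi_t)$. For the lower bound I would start from the triangle inequality $k_\Omega(z,\phi_{nu}(z))\le\sum_{j=0}^{n-1}a_j$ and exploit that $a_j\downarrow s_u(\phi_t,z)$: given $\varepsilon>0$, choosing $J$ with $a_j\le s_u(\phi_t,z)+\varepsilon$ for $j\ge J$ yields $k_\Omega(z,\phi_{nu}(z))\le\sum_{j<J}a_j+(n-J)(s_u(\phi_t,z)+\varepsilon)$. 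Dividing by $nu$ and letting $n\to+\infty$, the left-hand side tends to $c_\Omega(\phi_t)$ (Lemma~\ref{Lem:def-hyper-step} along the sequence $s=nu$), so $c_\Omega(\phi_t)\le(s_u(\phi_t,z)+\varepsilon)/u$; letting $\varepsilon\to0$ gives $c_\Omega(\phi_t)\le s_u(\phi_t,z)/u$ for every $u>0$, and hence $\liminf_{u\to+\infty}s_u(\phi_t,z)/u\ge c_\Omega(\phi_t)$. The two bounds together force the limit to exist and equal $c_\Omega(\phi_t)$.

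The hard part is the lower bound, where the decreasing convergence $a_j\downarrow s_u(\phi_t,z)$ must be converted into control of the long-run Ces\`aro average $\frac1n\sum_{j=0}^{n-1}a_j$; the only genuine care is to take $n\to+\infty$ first and $\varepsilon\to0$ afterwards, and to recognise that $k_\Omega(z,\phi_{nu}(z))/(nu)\to c_\Omega(\phi_t)$ because the full limit in Lemma~\ref{Lem:def-hyper-step} exists and so agrees with its value along any subsequence $s=nu$. I note that no appeal to Fekete's theorem in the variable $u$ is needed here: although $u\mapsto s_u(\phi_t,z)$ is subadditive (which would give $\lim_{u\to+\infty} s_u(\phi_t,z)/u=\inf_{u>0} s_u(\phi_t,z)/u$ directly), the squeeze above is shorter and already yields both existence of the limit and its identification with $c_\Omega(\phi_t)$.
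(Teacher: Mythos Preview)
Your proof is correct and follows essentially the same route as the paper: the upper bound via $s_u(\phi_t,z)\le k_\Omega(z,\phi_u(z))$ and the lower bound via the triangle inequality $k_\Omega(z,\phi_{nu}(z))\le\sum_{j<n}a_j$ together with the convergence $a_j\to s_u(\phi_t,z)$. The only cosmetic difference is that the paper cites the Ces\`aro Means Theorem for the averaged sum, whereas you spell out the underlying $\varepsilon$-argument; both yield $c_\Omega(\phi_t)\le s_u(\phi_t,z)/u$ for every $u>0$, which is in fact slightly stronger than what the statement requires.
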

\begin{proof}
Clearly $k_\Omega(z, \phi_u(z))\geq s_u(\phi_t,z)$ for all $u\geq 0$ because of the contractiveness of the hyperbolic distance with respect to holomorphic functions. Then,
\[
c_\Omega(\phi_t)\geq \limsup_{u\to +\infty}\frac{s_u(\phi_t,z)}{u}.
\]

In order to prove the converse, let $m\in \N$ and $u> 0$. By the triangle inequality,
\[
\frac{k_\Omega(z, \phi_{um}(z))}{um}\leq \frac{1}{um}\sum_{j=0}^{m-1}k_\Omega(\phi_{uj}(z), \phi_{u(j+1)}(z)).
\]
Note that $k_\Omega(\phi_{uj}(z), \phi_{u(j+1)}(z))\to s_u(\phi_t, z)$ as $j\to \infty$, hence, by the Ces\`aro Means Theorem  the right-hand side of the previous inequality converges to $\frac{s_u(\phi_t,z)}{u}$ as $m\to \infty$. Therefore, for any $u>0$ it holds
\[
\lim_{t\to +\infty}\frac{k_\Omega(z,\phi_t(z))}{t}=
\lim_{\N\ni m\to \infty}\frac{k_\Omega(z,\phi_{mu}(z))}{um}\leq \frac{s_u(\phi_t,z)}{u}.
\]
This proves that
\[
c_\Omega(\phi_t)\leq \liminf_{u\to +\infty}\frac{s_u(\phi_t,z)}{u},
\]
and the result follows.
\end{proof}

In case of a non-elliptic semigroup in $\D$ the divergence rate is essentially equal to the spectral value.
\begin{theorem}\label{Thm:speed-dilation}
Let $(\phi_t)$ be a non-elliptic semigroup in $\D$ with Denjoy-Wolff point $\tau\in \partial\D$ and spectral value $\lambda\geq 0$. Let $c_\D(\phi_t)$ denote the divergence rate of $(\phi_t)$. Then
\begin{equation}\label{Eq:speed-divergence}
c_\D(\phi_t)=\frac{1}{2} \lambda.
\end{equation}
In particular, if $(\phi_t)$ is hyperbolic then $c_\D(\phi_t)>0$ while if $(\phi_t)$ is parabolic then $c_\D(\phi_t)=0$.
\end{theorem}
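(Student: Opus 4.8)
The plan is to reduce the statement to an asymptotic estimate on how fast the orbit $s\mapsto\phi_s(z)$ approaches the Denjoy--Wolff point $\tau$, and then to pin the rate down by the two one-sided inequalities encoded in the notion of spectral value. Fix $z\in\D$; by the continuous Denjoy--Wolff theorem $\phi_s(z)\to\tau\in\de\D$ as $s\to+\infty$. First I would use the exact identity $1-\rho^2=\frac{(1-|z|^2)(1-|w|^2)}{|1-\bar z w|^2}$, where $\rho=\left|\frac{z-w}{1-\bar z w}\right|$, to write $\omega(\phi_s(z),z)=\frac12\log\frac{1}{1-|\phi_s(z)|^2}+R(s)$ and to observe that the remainder $R(s)$ stays bounded as $s\to+\infty$: indeed $\rho\to1$ and $|1-\bar z\phi_s(z)|\to|1-\bar z\tau|>0$, so the extra factors contribute only an $O(1)$ term (this hinges on $\tau$ being a single fixed boundary point, hence holds regardless of whether the approach is tangential). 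Since $c_\D(\phi_t)=\lim_s\omega(\phi_s(z),z)/s$ exists by Lemma \ref{Lem:def-hyper-step}, dividing by $s$ shows that $\ell:=\lim_{s\to+\infty}\frac{-\log(1-|\phi_s(z)|^2)}{s}$ exists and $c_\D(\phi_t)=\tfrac12\ell$. It therefore suffices to prove $\ell=\lambda$.

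For the bound $\ell\ge\lambda$ I would invoke Julia's lemma for each map $\phi_s$, whose Denjoy--Wolff point is $\tau$ and whose boundary dilation coefficient is exactly $e^{-\lambda s}$ by definition of the spectral value. This yields $\frac{|\tau-\phi_s(z)|^2}{1-|\phi_s(z)|^2}\le e^{-\lambda s}\,\frac{|\tau-z|^2}{1-|z|^2}$. Using $|\tau-\phi_s(z)|\ge 1-|\phi_s(z)|\ge\tfrac12(1-|\phi_s(z)|^2)$, the left-hand side is bounded below by $\tfrac14(1-|\phi_s(z)|^2)$, so $1-|\phi_s(z)|^2\le C e^{-\lambda s}$ for a constant $C$ depending only on $z$. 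Taking logarithms and dividing by $s$ gives $\ell\ge\lambda$.

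For the reverse bound $\ell\le\lambda$ I would use the $\liminf$ in the definition of the spectral value, read now along the orbit. Writing $a(s):=1-|\phi_s(z)|$ and $w_s:=\phi_s(z)\to\tau$, the semigroup property gives $\frac{a(s+1)}{a(s)}=\frac{1-|\phi_1(w_s)|}{1-|w_s|}$, whence $\liminf_{s\to+\infty}\frac{a(s+1)}{a(s)}\ge\liminf_{w\to\tau}\frac{1-|\phi_1(w)|}{1-|w|}=e^{-\lambda}$. Thus, given $\epsilon\in(0,e^{-\lambda})$, there is $s_0$ with $a(s+1)\ge(e^{-\lambda}-\epsilon)a(s)$ for all $s\ge s_0$; iterating, $a(s_0+n)\ge(e^{-\lambda}-\epsilon)^n a(s_0)$, so $\limsup_{n}\frac{-\log a(s_0+n)}{s_0+n}\le-\log(e^{-\lambda}-\epsilon)$. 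Since $\lim_s\frac{-\log a(s)}{s}$ exists and equals $\ell$ (it differs from $\frac{-\log(1-|\phi_s(z)|^2)}{s}$ only by $\frac1s\log(1+|\phi_s(z)|)\to0$), the estimate transfers from the subsequence $(s_0+n)$ to the full limit, forcing $\ell\le-\log(e^{-\lambda}-\epsilon)$; letting $\epsilon\to0$ gives $\ell\le\lambda$. Combining the two bounds, $\ell=\lambda$ and hence $c_\D(\phi_t)=\tfrac12\lambda$, and the hyperbolic/parabolic dichotomy is immediate from $\lambda>0$, resp.\ $\lambda=0$.

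I expect the delicate point to be the upper bound: one must extract a genuine rate of escape from a purely one-sided ($\liminf$) condition, which is why the argument routes through the multiplicative iteration along integer times and leans on the already-established existence of $\ell$ to pass from the subsequence $(s_0+n)$ back to the full limit. A secondary point to verify carefully is the boundedness of the remainder $R(s)$ in the distance asymptotic, which is what guarantees that the $O(1)$ discrepancy between $\omega(\phi_s(z),z)$ and $\frac12\log\frac{1}{1-|\phi_s(z)|^2}$ washes out after division by $s$, even when the orbit tends to $\tau$ tangentially.
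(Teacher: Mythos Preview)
Your proof is correct and follows essentially the same strategy as the paper: one inequality via Julia's lemma, the other via the $\liminf$ definition of the spectral value combined with a ratio-to-root argument along the orbit. The only cosmetic difference is that the paper takes $z=0$, which makes your remainder $R(s)$ identically zero (since $\omega(0,w)=\tfrac12\log\tfrac{1+|w|}{1-|w|}$) and so avoids that step; conversely, your use of Julia's inequality directly for each $\phi_s$ is a touch more streamlined than the paper's iterated horocycle estimate.
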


\begin{proof}
Set $c:=c_\D(\phi_t)$. Let $z_m:=\phi_m(0)$, $m\in \N$. By the continuous Denjoy-Wolff theorem,  $\lim_{m\to \infty}z_m=\tau$. Moreover
\[
\liminf_{m\to \infty}\frac{1-|z_{m+1}|}{1-|z_m|}=\liminf_{m\to \infty}\frac{1-|\phi_1(z_m)|}{1-|z_m|}\geq e^{-\lambda}.
\]
Hence
\[
\liminf_{m\to \infty}(1-|z_m|)^{1/m}\geq \liminf_{m\to \infty}\frac{1-|\phi_1(z_m)|}{1-|z_m|}\geq e^{-\lambda}.
\]
Thus,
\begin{equation*}
\begin{split}
c&=\lim_{t\to \infty}\frac{\omega(0,\phi_t(0))}{t}=\lim_{\N\ni m\to \infty}\frac{\omega(0,\phi_m(0))}{m}
=\lim_{m\to \infty}\frac{\omega(0,z_m)}{m}\\&=\frac{1}{2}\lim_{m\to \infty}\log\left(\frac{1+|z_m|}{1-|z_m|} \right)^{1/m}\leq -\frac{1}{2}\log e^{-\lambda}=\frac{1}{2}\lambda.
\end{split}
\end{equation*}
If $(\phi_t)$ is parabolic, that is $\lambda=0$, that estimate implies that $c=0$ as well.

In case $(\phi_t)$ is hyperbolic, thus $\lambda>0$, let $R\in (0,1)$ and let $E(\tau, R)$ be a horocycle of center $\tau$ and radius $R$. Since the  point of $\overline{E(\tau, R)}$ closest to $0$ is $\frac{1-R}{1+R}\tau$, it holds
\begin{equation}\label{Eq:inf-horosph}
\inf_{z\in E(\tau, R)}\omega (0,z)=\omega(0,\frac{1-R}{1+R}\tau)=-\frac{1}{2}\log R.
\end{equation}
Since $0\in \partial E(\tau, 1)$, by Julia's Lemma it follows that $z_m=\phi_m(0)=\phi_1^{\circ m}(0)\in \overline{E(\tau, e^{-m\lambda})}$ for all $m\in \N$. Hence, by \eqref {Eq:inf-horosph},
\[
\frac{1}{m}\omega(0, z_m)\geq \frac{1}{m}\omega(0, \frac{1-e^{-m\lambda}}{1+e^{-m\lambda}}\tau)=-\frac{1}{2m}\log e^{-m\lambda}=\frac{1}{2}\lambda.
\]
Therefore,
\[
c=\lim_{m\to \infty}\frac{1}{m}\omega(0, z_m)\geq \frac{1}{2}\lambda.
\]
\end{proof}

This result suggest the following very useful definition.
\begin{definition} Let $(\phi_t)$ be a continuous semigroup on a Riemann surface $\Omega$ and suppose that $\phi_t$ has no common fixed point, that is, there is no $z\in\Omega$ such that $\phi_t(z)=z$, for all $t>0$. The semigroup is said hyperbolic if $c_\Omega(\phi_t)>0$ while it is said parabolic if $c_\Omega(\phi_t)=0$.
\end{definition}

Formula \eqref{Eq:speed-divergence} allows easily to see how the spectral value of a semigroup behaves under conjugacy. To set up properly the terminology, we give the following definition:

\begin{definition}
Let $(\phi_t)$  be a semigroup in $\D$. Let $\Omega$ be a Riemann surface  and let $(\varphi_t)$ be a semigroup in $\Omega$. We say that $(\phi_t)$ is  {\sl semi-conjugated to}\index{Semigroup!semi-conjugated} $(\varphi_t)$ if there exists a holomorphic map  $g: \D \to \Omega$, called  a {\sl semi-conjugation map}\index{Semi-conjugation map}, such that  for all $t\geq 0$ it holds $g\circ \phi_t=\varphi_t\circ g$.
\end{definition}

\begin{proposition}\label{Prop:speed-of-div-semiconjugacy}
Let $(\phi_t)$  be a semigroup in $\D$. Suppose that $(\phi_t)$ is semi-conjugated to a continuous semigroup $(\varphi_t)$ of a Riemann surface $\Omega$ via the semi-conjugation map $g$. Then
\begin{equation}\label{Eq:paso-abajo}
c_\D(\phi_t)\geq c_\Omega(\varphi_t).
\end{equation}
\end{proposition}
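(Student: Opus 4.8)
The plan is to combine the contractiveness of the hyperbolic distance under the holomorphic semi-conjugation map $g$ with the base-point independence of the divergence rate established in Lemma~\ref{Lem:def-hyper-step}. First I would fix an arbitrary point $z\in\D$ and set $w:=g(z)\in\Omega$. Since $g:\D\to\Omega$ is holomorphic, the Schwarz--Pick contractiveness property of the hyperbolic distance gives $k_\Omega(g(a),g(b))\le \omega(a,b)$ for all $a,b\in\D$, where as usual $\omega=k_\D$.

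Next I would exploit the semi-conjugation identity $g\circ\phi_s=\varphi_s\circ g$ to rewrite the orbit of $(\varphi_t)$ through $w$ in terms of $g$. Concretely, for every $s\ge 0$,
\[
k_\Omega(\varphi_s(w),w)=k_\Omega(g(\phi_s(z)),g(z))\le \omega(\phi_s(z),z).
\]
Dividing by $s$ and letting $s\to+\infty$, Lemma~\ref{Lem:def-hyper-step} applied to $(\varphi_t)$ on $\Omega$ shows that the left-hand side tends to $c_\Omega(\varphi_t)$, while the same lemma applied to $(\phi_t)$ shows that the right-hand side tends to $c_\D(\phi_t)$. This yields $c_\Omega(\varphi_t)\le c_\D(\phi_t)$, which is exactly \eqref{Eq:paso-abajo}.

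There is essentially no serious obstacle in this argument; the only points requiring a word of care are that both quantities are genuine limits and that $c_\Omega(\varphi_t)$ may legitimately be computed at the specific base point $w=g(z)$. Both facts are furnished by Lemma~\ref{Lem:def-hyper-step}, whose hypotheses---continuity of the semigroups on a Riemann surface---are precisely what is assumed for $(\phi_t)$ and $(\varphi_t)$ here, so that the divergence rates exist and are independent of the chosen base points.
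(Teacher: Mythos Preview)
Your proof is correct and follows essentially the same approach as the paper: apply the Schwarz--Pick contraction for $g$ to the pair $(z,\phi_s(z))$, use the semi-conjugation $g\circ\phi_s=\varphi_s\circ g$, divide by $s$, and pass to the limit via Lemma~\ref{Lem:def-hyper-step}. The only cosmetic difference is that the paper fixes $z=0$ while you keep $z$ arbitrary and explicitly invoke base-point independence.
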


\begin{proof}
Since $g:\D \to \Omega$ is holomorphic, it contracts the hyperbolic distance. Hence,
\[
k_\D(0,\phi_t(0))\geq k_\Omega(g(0), g(\phi_t(0)))=k_\Omega(g(0), \varphi_t(g(0)).
\]
Dividing by $t$ and taking the limit as $t\to \infty$, formula \eqref{Eq:paso-abajo} follows at once.
\end{proof}

\begin{corollary}
Let $(\phi_t), (\varphi_t)$  be two semigroups in $\D$ and suppose that $(\phi_t)$ is semi-conjugated to   $(\varphi_t)$. If $(\varphi_t)$ is hyperbolic with spectral value $\eta>0$, then $(\phi_t)$ is hyperbolic and its spectral value $\lambda$ satisfies
$\lambda\geq \eta$.
\end{corollary}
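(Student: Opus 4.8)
The plan is to chain together the two results established immediately above: the monotonicity of the divergence rate under semi-conjugation (Proposition~\ref{Prop:speed-of-div-semiconjugacy}) and the identification of the divergence rate with half the spectral value in the non-elliptic case (Theorem~\ref{Thm:speed-dilation}). The key observation is that here the target Riemann surface is simply $\D$ itself, so Proposition~\ref{Prop:speed-of-div-semiconjugacy} applies with $\Omega=\D$ and gives directly
\[
c_\D(\phi_t)\geq c_\D(\varphi_t).
\]

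First I would use the hypothesis that $(\varphi_t)$ is hyperbolic with spectral value $\eta>0$. By Theorem~\ref{Thm:speed-dilation} this means $c_\D(\varphi_t)=\tfrac12\eta$, and combining with the inequality above yields $c_\D(\phi_t)\geq \tfrac12\eta>0$. So $(\phi_t)$ has strictly positive divergence rate.

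The one point that requires a moment's care is that Theorem~\ref{Thm:speed-dilation} is stated only for non-elliptic semigroups, so before reading off the spectral value of $(\phi_t)$ from its divergence rate I must first rule out that $(\phi_t)$ is elliptic. This is immediate from Remark~\ref{Rem:speed-elliptic}: every elliptic semigroup in $\D$ has zero divergence rate, whereas we have just shown $c_\D(\phi_t)>0$. Hence $(\phi_t)$ is non-elliptic, and Theorem~\ref{Thm:speed-dilation} then gives $c_\D(\phi_t)=\tfrac12\lambda$, where $\lambda\geq0$ is the spectral value of $(\phi_t)$.

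Putting the pieces together, $\tfrac12\lambda=c_\D(\phi_t)\geq\tfrac12\eta$, so $\lambda\geq\eta>0$; in particular $\lambda>0$, which by the dichotomy in Theorem~\ref{Thm:speed-dilation} means exactly that $(\phi_t)$ is hyperbolic. I do not expect any genuine obstacle here: the entire content is this short chain of one inequality and two applications of the divergence-rate/spectral-value identity, and the only subtlety is the bookkeeping of excluding the elliptic case via Remark~\ref{Rem:speed-elliptic} before invoking the spectral-value formula for $(\phi_t)$.
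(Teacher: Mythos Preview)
Your proof is correct and follows essentially the same route as the paper's: both combine Proposition~\ref{Prop:speed-of-div-semiconjugacy} with two applications of Theorem~\ref{Thm:speed-dilation}, using the positivity of the divergence rate to exclude the elliptic case before the second application. The only difference is that you make the appeal to Remark~\ref{Rem:speed-elliptic} explicit, whereas the paper leaves it implicit.
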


\begin{proof}
Let $c$ denote the divergence rate of $(\phi_t)$ and $\tilde{c}$ that of $(\varphi_t)$. By \eqref{Eq:speed-divergence}, $\tilde{c}=\frac{1}{2}\eta>0$. Hence, by \eqref{Eq:paso-abajo}, $c\geq \tilde{c}>0$, which implies in particular that $(\phi_t)$ is non-elliptic. Therefore, again by \eqref{Eq:speed-divergence},
\[
\lambda=2c\geq 2\tilde{c}=\eta>0,
\]
and $(\phi_t)$ is hyperbolic.
\end{proof}

\section{Holomorphic models}

\begin{definition}\label{Def:semi-model}
 Let $(\phi_t)$ be a  semigroup in $\D$. A {\sl holomorphic model}  for $(\phi_t)$ is a triple  $(\Omega,h,\psi_t)$  where
$\Omega$ is a Riemann surface, $h: \D \to \Omega$ is univalent and $(\psi_t)$ is a continuous group of automorphisms of $\Omega$ such that
\begin{equation}\label{Eq:unosemigroup}
h\circ \phi_t=\psi_t\circ h,\quad t\geq 0,
\end{equation}
and
\begin{equation}\label{Eq:duesemigroup}
\bigcup_{t\geq 0} \psi_{-t}(h(\D))=\Omega.
\end{equation}

We call the manifold $\Omega$ the {\sl base space} \index{Base space of a holomorphic semi-model} and the mapping $h$ the {\sl intertwining mapping}. \index{Intertwining mapping}
\end{definition}
Notice that given a model $(\Omega, h, \Phi_t)$ for a semigroup $(\phi_t)$ of holomorphic self-maps of $\D$ then $(\phi_t)$ is a group if and only if $h(\D)=\Omega$.

The previous notion of holomorphic model was introduced in \cite{AroBra16}, where it was proved that every semigroup of holomorphic self-maps of any complex manifold admits a  holomorphic model, unique up to ``holomorphic equivalence''. The central  idea of this result is to define an abstract space (the space of orbits of a semigroup or  abstract basin of attraction) which inherits a complex structure of a simply connected Riemann surface, in such a way that the semigroup  is conjugated to a continuous group of automorphisms of such a Riemann surface. Moreover, a model is ``universal'' in the sense that every other conjugation of the semigroup to a group of automorphisms factorizes through the model (see \cite[Section 6]{AroBra16} for more details).

\begin{theorem}\label{Thm:existmodel}
Let $(\phi_t)$ be a semigroup in $\D$. Then there exists a holomorphic model $(\Omega, h, \psi_t)$ for $(\phi_t)$. Moreover, either $\Omega$ is biholomorphic to $\D$ or $\Omega$ is biholomorphic to $\C$.
\end{theorem}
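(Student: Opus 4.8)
The plan is to realize $\Omega$ as the \emph{abstract basin of attraction} of $(\phi_t)$, i.e. as a direct (inductive) limit in which the semigroup is forced to become a group, and then to pin down its conformal type by uniformization. Concretely, I would put on $\D\times[0,+\infty)$ the equivalence relation generated by $(z,s)\sim(\phi_{t-s}(z),t)$ for $0\le s\le t$, let $\Omega$ be the quotient, and write $\iota_s\colon\D\to\Omega$, $\iota_s(z):=[(z,s)]$, so that $\iota_s=\iota_t\circ\phi_{t-s}$ whenever $t\ge s$. Setting $h:=\iota_0$ and $\Omega_t:=\iota_t(\D)$, this last relation gives $\Omega_s\subseteq\Omega_t$ for $s\le t$ and $\Omega=\bigcup_{t\ge0}\Omega_t$. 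A group is produced by time translation: for $u\in\R$ I set $\psi_u([(z,t)]):=[(z,t-u)]$, computed after replacing the representative by one with time $\ge u$ (always possible since $[(z,t)]=[(\phi_r(z),t+r)]$ for every $r\ge0$); one checks this is well defined, that $(\psi_u)_{u\in\R}$ is a group with $\psi_u\circ h=h\circ\phi_u$, and that $\psi_{-t}(h(\D))=\Omega_t$, which yields \eqref{Eq:unosemigroup} and \eqref{Eq:duesemigroup} at once.

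The analytic heart of the existence statement is to upgrade this set-theoretic quotient to a Riemann surface. Here the decisive preliminary fact is that each $\phi_t$ is univalent — a standard consequence of the continuity of the semigroup (via Hurwitz's theorem together with $\phi_t=\phi_{t/n}^{\circ n}$ for $n$ large). Granted this, every connecting map $\phi_{t-s}$ is a holomorphic open embedding, so the $\iota_t$ are injective with open images, the $\Omega_t$ form a genuine increasing chain of open sets, each carrying via $\iota_t$ the complex structure of $\D$, and the transition maps $\iota_t^{-1}\circ\iota_s=\phi_{t-s}$ are biholomorphic onto their images. Thus $\Omega$ is an honest Hausdorff, connected Riemann surface, $h=\iota_0$ is univalent, and each $\psi_u$ is biholomorphic because in the charts $\iota_t,\iota_{t-u}$ it is the identity $z\mapsto z$; continuity of $u\mapsto\psi_u$ follows from that of the semigroup. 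I expect the only genuinely delicate point to be the Hausdorff/embedding bookkeeping, which is exactly where univalence of the $\phi_t$ is indispensable: without it the ``increasing union'' could collapse and the $\iota_t$ would fail to be charts.

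For the dichotomy I would invoke the Uniformization Theorem: every simply connected Riemann surface is biholomorphic to $\Po^1$, to $\C$, or to $\D$, so it suffices to show $\Omega$ is simply connected and non-compact. Simple connectivity is immediate from the construction, since $\Omega$ is the increasing union of the simply connected open sets $\Omega_t$ (each biholomorphic to $\D$) and any loop, being compact, lies in some $\Omega_t$ and hence contracts there. For non-compactness, note that $\Omega=\bigcup_{n\in\N}\Omega_n$ is a countable increasing open cover; were $\Omega$ compact, a finite subcover would give $\Omega=\Omega_N$ for some $N$, and since $\iota_N$ is a biholomorphism onto $\Omega_N$ this would make $\Omega$ biholomorphic to $\D$, which is not compact — a contradiction. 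Hence $\Po^1$ is excluded and $\Omega$ is biholomorphic either to $\C$ or to $\D$, as claimed. One can moreover match the two cases with the hyperbolic/parabolic splitting using the divergence rate of Theorem \ref{Thm:speed-dilation}, but that refinement is not needed for the present statement.
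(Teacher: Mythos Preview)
Your proposal is correct and follows precisely the approach the paper itself invokes. Note that the paper does \emph{not} actually prove Theorem~\ref{Thm:existmodel}: it states the result and refers to \cite{AroBra16}, summarizing the strategy in the paragraph preceding the theorem as ``define an abstract space (the space of orbits of a semigroup or abstract basin of attraction) which inherits a complex structure of a simply connected Riemann surface, in such a way that the semigroup is conjugated to a continuous group of automorphisms.'' Your direct-limit construction $\Omega=\varinjlim(\D,\phi_{t-s})$, with $h=\iota_0$ and $\psi_u$ given by time translation, is exactly this abstract basin; your identification of univalence of the $\phi_t$ as the key ingredient for the charts/Hausdorff structure, and your appeal to uniformization together with the increasing-union argument to exclude $\Po^1$, are the standard steps carried out in \cite{AroBra16}. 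In short, you have supplied the proof the paper outsources, along the same lines it sketches.
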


The model respects basic properties of the semigroup, in particular, the divergence rate.
\begin{lemma}\label{Lem:speed-and-model}
Let $(\phi_t)$ be a semigroup in $\D$. Let $(\Omega, h, \psi_t)$ be a holomorphic model for $(\phi_t)$. Then
\begin{equation}\label{Eq:distance-limit}
k_\Omega(h(z),h(w))=\lim_{t\to +\infty}\omega(\phi_t(z), \phi_t(w)) \quad z,w\in \D.
\end{equation}
Moreover, $c_\D(\phi_t)=c_\Omega(\psi_t)$.
\end{lemma}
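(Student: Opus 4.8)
The plan is to prove the two assertions of Lemma~\ref{Lem:speed-and-model} in order, using only the defining properties \eqref{Eq:unosemigroup} and \eqref{Eq:duesemigroup} of a holomorphic model together with Lemma~\ref{Lem:def-hyper-step}. For \eqref{Eq:distance-limit}, fix $z,w\in\D$ and consider the quantity $\omega(\phi_t(z),\phi_t(w))$. The key observation is the monotonicity already exploited before the definition of hyperbolic step: since $\phi_{t+s}=\phi_t\circ\phi_s$ and holomorphic self-maps contract $\omega$, the function $t\mapsto \omega(\phi_t(z),\phi_t(w))$ is non-increasing, hence the limit on the right-hand side of \eqref{Eq:distance-limit} exists in $[0,+\infty)$. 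To identify it with $k_\Omega(h(z),h(w))$, I would apply $h$ and use the intertwining relation \eqref{Eq:unosemigroup}: because $\psi_t$ is an \emph{automorphism} of $\Omega$, it is a hyperbolic isometry, so
\[
k_\Omega(h(\phi_t(z)),h(\phi_t(w)))=k_\Omega(\psi_t(h(z)),\psi_t(h(w)))=k_\Omega(h(z),h(w))
\]
for every $t\geq 0$; this value is constant in $t$.

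The remaining task is to relate $k_\Omega(h(\phi_t(z)),h(\phi_t(w)))$ to $\omega(\phi_t(z),\phi_t(w))$ and pass to the limit. Since $h:\D\to\Omega$ is holomorphic it contracts, giving $k_\Omega(h(z),h(w))\leq \omega(\phi_t(z),\phi_t(w))$ after accounting for the isometry step above, so the constant $k_\Omega(h(z),h(w))$ is a lower bound for the decreasing sequence, hence $k_\Omega(h(z),h(w))\leq \lim_{t\to+\infty}\omega(\phi_t(z),\phi_t(w))$. For the reverse inequality—which I expect to be the main obstacle—I would exploit the surjectivity condition \eqref{Eq:duesemigroup}: the restriction of $h$ to the image $h(\D)$ becomes, after saturating by the group $(\psi_{-t})$, a hyperbolic isometry onto $\Omega$ in the limit. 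More concretely, given $\varepsilon>0$, one chooses $s$ large and writes $h(\phi_s(z)),h(\phi_s(w))$ as points whose $\Omega$-distance is already $k_\Omega(h(z),h(w))$; because $h$ restricted to the deep orbit region is nearly an isometry (the hyperbolic metric of $\D$ and the pulled-back metric $h^*k_\Omega$ agree in the limit along orbits escaping to the boundary, by the contraction being asymptotically sharp), one obtains $\omega(\phi_s(z),\phi_s(w))\leq k_\Omega(h(z),h(w))+\varepsilon$. Letting $s\to+\infty$ and then $\varepsilon\to0$ gives the matching upper bound and establishes \eqref{Eq:distance-limit}.

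For the final claim $c_\D(\phi_t)=c_\Omega(\psi_t)$, I would argue as follows. By Proposition~\ref{Prop:speed-of-div-semiconjugacy}, since $h$ is a semi-conjugation map from $(\phi_t)$ to $(\psi_t)$, we immediately get $c_\D(\phi_t)\geq c_\Omega(\psi_t)$. For the reverse inequality I would use \eqref{Eq:distance-limit} applied at a single orbit. Taking $z\in\D$ fixed and comparing $\phi_t(z)$ against $\phi_{t+u}(z)=\phi_u(\phi_t(z))$, formula \eqref{Eq:distance-limit} together with the intertwining relation yields, for each $u\geq 0$,
\[
s_u(\phi_t,z)=\lim_{t\to+\infty}\omega(\phi_t(z),\phi_{t+u}(z))=k_\Omega(h(z),h(\phi_u(z)))=k_\Omega(h(z),\psi_u(h(z))).
\]
Since $(\psi_t)$ is a \emph{group}, Remark~\ref{Rem:step-elliptic} gives $s_u(\psi_t,h(z))=k_\Omega(h(z),\psi_u(h(z)))$, so the $u$-th hyperbolic steps of $(\phi_t)$ and $(\psi_t)$ coincide. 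Dividing by $u$, letting $u\to+\infty$, and invoking Proposition~\ref{Prop:higher-step-speed} for both semigroups then forces
\[
c_\D(\phi_t)=\lim_{u\to+\infty}\frac{s_u(\phi_t,z)}{u}=\lim_{u\to+\infty}\frac{s_u(\psi_t,h(z))}{u}=c_\Omega(\psi_t),
\]
which completes the proof. The delicate point remains the sharp contraction argument in the second paragraph; everything else is a clean bookkeeping of isometry versus contraction combined with the already-proved Propositions~\ref{Prop:higher-step-speed} and~\ref{Prop:speed-of-div-semiconjugacy}.
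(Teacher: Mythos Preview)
Your argument for the second assertion, $c_\D(\phi_t)=c_\Omega(\psi_t)$, is correct and matches the paper's: you identify $s_u(\phi_t,z)$ with $s_u(\psi_t,h(z))$ via \eqref{Eq:distance-limit} and then invoke Proposition~\ref{Prop:higher-step-speed}. (The detour through Proposition~\ref{Prop:speed-of-div-semiconjugacy} is harmless but unnecessary.)

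The gap is in your proof of \eqref{Eq:distance-limit}, and you flag it yourself. The ``asymptotically sharp contraction'' heuristic is not a proof, and in fact it misidentifies the mechanism. You are treating $h$ throughout as a contraction $\D\to\Omega$, which only ever gives you the inequality $k_\Omega\le\omega$. What you are missing is that $h$, being \emph{univalent}, is an \emph{exact isometry} from $(\D,\omega)$ onto $(h(\D),k_{h(\D)})$. That observation, together with the growing-union property \eqref{Eq:duesemigroup}, gives both inequalities at once and with no $\varepsilon$-argument.

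Concretely, the paper argues as follows. From \eqref{Eq:unosemigroup} one checks $\psi_{-s}(h(\D))\subset\psi_{-t}(h(\D))$ for $s\le t$, so by \eqref{Eq:duesemigroup} the family $\{\psi_{-t}(h(\D))\}_{t\ge0}$ is an increasing exhaustion of $\Omega$; hence $k_\Omega(h(z),h(w))=\lim_{t\to+\infty}k_{\psi_{-t}(h(\D))}(h(z),h(w))$. Now $\psi_t$ is a biholomorphism from $\psi_{-t}(h(\D))$ onto $h(\D)$, so
\[
k_{\psi_{-t}(h(\D))}(h(z),h(w))=k_{h(\D)}\bigl(\psi_t(h(z)),\psi_t(h(w))\bigr)=k_{h(\D)}\bigl(h(\phi_t(z)),h(\phi_t(w))\bigr)=\omega(\phi_t(z),\phi_t(w)),
\]
the last equality because $h:\D\to h(\D)$ is a biholomorphism. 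Taking $t\to+\infty$ gives \eqref{Eq:distance-limit} as a genuine equality at every step, not an asymptotic one.
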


\begin{proof}
By \eqref{Eq:unosemigroup}, for $0\leq s\leq t$, we have
\[
\psi_{-s}(h(\D))=\psi_{-t}(\psi_{t-s}(h(\D)))=\psi_{-t}(h(\phi_{t-s}(\D)))\subset \psi_{-t}(h(\D)),
\]
hence the Riemann surface $\Omega$ is the growing union of  $\{\psi_{-t}(h(\D))\}_{t\geq 0}$. Taking into account that $h$ is an isometry between $\omega$ and $k_{h(\D)}$, we have
\begin{equation*}
\begin{split}
k_\Omega(h(z), h(w))&=\lim_{t\to +\infty}k_{\psi_{-t}(h(\D))}(h(z), h(w))=\lim_{t\to +\infty} k_{h(\D)}(\psi_t(h(z)), \psi_t(h(w)))\\ &= \lim_{t\to +\infty} k_{h(\D)}(h(\phi_t(z)), h(\phi_t(w)))=\lim_{t\to +\infty} \omega(\phi_t(z), \phi_t(w)),
\end{split}
\end{equation*}
which proves \eqref{Eq:distance-limit}.

Finally, let $z\in \D$. Then for all $u\geq 0$,  the $u$-th hyperbolic step of $(\phi_t)$ satisfies
\begin{equation*}
\begin{split}
s_u(\phi_t, z)&=\lim_{v\to +\infty}\omega(\phi_v(z), \phi_{v+u}(z))=\lim_{v\to +\infty}\omega(\phi_v(z), \phi_{v}(\phi_u(z)))\\&\stackrel{\eqref{Eq:distance-limit}}{=} k_\Omega(h(z), h(\phi_u(z)))=k_\Omega(h(z), \psi_u(h(z)))\\&=k_\Omega(\psi_v(h(z)), \psi_{v+u}(h(z)))=\lim_{v\to +\infty}k_\Omega(\psi_v(h(z)), \psi_{v+u}(h(z)))\\&=s_u(\psi_t, h(z)),
\end{split}
\end{equation*}
where we used that $\psi_v$ is an isometry for $k_\Omega$ for all $v\geq 0$. Hence, $c_\D(\phi_t)=c_\Omega(\psi_t)$ by Proposition \ref{Prop:higher-step-speed}.
\end{proof}

A first consequence of the previous result and the results of the previous section is that holomorphic models detect the type of the semigroups:

\begin{corollary}\label{Prop:zero-pos-semi-b}
Let $(\phi_t)$ be a non-elliptic semigroup in $\D$ and let $(\Omega, h, \psi_t)$ be a holomorphic model for $(\phi_t)$. Then
\begin{enumerate}
\item $(\phi_t)$ is hyperbolic if and only if $c_\Omega(\psi_t)>0$,
\item $(\phi_t)$ is parabolic if and only if $c_\Omega(\psi_t)=0$.
\end{enumerate}
\end{corollary}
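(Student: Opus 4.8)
The plan is simply to combine the two results that immediately precede the statement, since together they already contain everything needed. Recall that, by definition, a non-elliptic semigroup $(\phi_t)$ in $\D$ with spectral value $\lambda \geq 0$ is called hyperbolic exactly when $\lambda > 0$ and parabolic exactly when $\lambda = 0$.

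The first step is to invoke Theorem \ref{Thm:speed-dilation}, which identifies the divergence rate in $\D$ with the spectral value via $c_\D(\phi_t) = \tfrac{1}{2}\lambda$. This converts the hyperbolic/parabolic dichotomy into a statement about the divergence rate: $(\phi_t)$ is hyperbolic if and only if $c_\D(\phi_t) > 0$, and parabolic if and only if $c_\D(\phi_t) = 0$. The second step is to apply the last assertion of Lemma \ref{Lem:speed-and-model}, namely that the divergence rate is transferred unchanged to the model, $c_\D(\phi_t) = c_\Omega(\psi_t)$. Substituting this equality into the two equivalences of the first step yields precisely conclusions (1) and (2).

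There is no genuine obstacle here: the corollary is a formal consequence of Theorem \ref{Thm:speed-dilation} and Lemma \ref{Lem:speed-and-model}, and the argument amounts to a one-line chaining of equivalences. The only point worth a moment's care is that the hypotheses genuinely apply --- that $(\phi_t)$ being non-elliptic is what makes the spectral-value formula of Theorem \ref{Thm:speed-dilation} available, and that the existence of a model (Theorem \ref{Thm:existmodel}) together with Lemma \ref{Lem:speed-and-model} is what guarantees both that $c_\Omega(\psi_t)$ is well-defined and that it coincides with $c_\D(\phi_t)$.
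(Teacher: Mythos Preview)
Your proposal is correct and follows exactly the same approach as the paper: the paper's proof is the single sentence ``The result follows directly from Theorem \ref{Thm:speed-dilation} and Lemma \ref{Lem:speed-and-model},'' and you have simply spelled out the chaining $c_\Omega(\psi_t)=c_\D(\phi_t)=\tfrac12\lambda$ that this sentence encodes.
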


\begin{proof}
The result follows directly from Theorem \ref{Thm:speed-dilation} and Lemma \ref{Lem:speed-and-model}.
\end{proof}

A second interesting  consequence of Lemma \ref{Lem:speed-and-model} is:

\begin{corollary}\label{Cor:indep-paso-iper}
Let $(\phi_t)$ be a semigroup in $\D$.  If there exist $z\in \D$ and $u> 0$ such that the hyperbolic step of order $u$ is equal to $0$, that is $s_u(\phi_t,z)=0$, then for every $v\geq 0$ and $w\in \D$ it holds $s_v(\phi_t,w)=0$.
\end{corollary}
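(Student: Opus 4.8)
The plan is to push the entire question onto a holomorphic model of $(\phi_t)$, where the group structure turns hyperbolic steps into honest distances. Fix a holomorphic model $(\Omega,h,\psi_t)$, which exists by Theorem~\ref{Thm:existmodel}. Lemma~\ref{Lem:speed-and-model} shows that the step is preserved by the intertwining map, $s_v(\phi_t,w)=s_v(\psi_t,h(w))$ for all $v\ge0$ and $w\in\D$, while Remark~\ref{Rem:step-elliptic}, applied to the \emph{group} $(\psi_t)$, evaluates $s_v(\psi_t,p)=k_\Omega(p,\psi_v(p))$. Composing the two I obtain the single formula
\begin{equation*}
s_v(\phi_t,w)=k_\Omega\bigl(h(w),\psi_v(h(w))\bigr),\qquad v\ge0,\ w\in\D,
\end{equation*}
so the statement reduces to understanding when these numbers vanish.

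By Theorem~\ref{Thm:existmodel} the base space satisfies either $\Omega\cong\C$ or $\Omega\cong\D$, and I would argue along this dichotomy. If $\Omega\cong\C$, the hyperbolic (Kobayashi) distance of $\C$ vanishes identically, so the displayed formula gives $s_v(\phi_t,w)=0$ simultaneously for every $v\ge0$ and every $w\in\D$; in this branch hypothesis and conclusion hold together and nothing remains to prove. Thus the only substantial case is $\Omega\cong\D$, where $k_\Omega$ is a genuine metric.

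Assume then $\Omega\cong\D$, and suppose the hypothesis $s_u(\phi_t,z)=0$ holds. Since $k_\Omega$ is a distance, the formula forces $\psi_u(h(z))=h(z)$, i.e. the automorphism $\psi_u$ fixes the interior point $p_0:=h(z)$. Because $(\psi_t)$ is a one-parameter group, $\psi_{t+u}(p_0)=\psi_t(\psi_u(p_0))=\psi_t(p_0)$, so the orbit $t\mapsto\psi_t(p_0)$ is $u$-periodic and in particular stays in a compact subset of $\Omega$. A one-parameter group of automorphisms of $\D$ with a relatively compact orbit is elliptic, hence has a common interior fixed point $q_0$ with $\psi_t(q_0)=q_0$ for all $t$. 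Using the model identity $\bigcup_{t\ge0}\psi_{-t}(h(\D))=\Omega$, I can choose $t_0\ge0$ with $q_0\in\psi_{-t_0}(h(\D))$; since $\psi_{t_0}(q_0)=q_0$ this yields $q_0\in h(\D)$, say $q_0=h(p)$, and then $h(\phi_t(p))=\psi_t(h(p))=q_0=h(p)$ gives $\phi_t(p)=p$ for all $t\ge0$. Thus $(\phi_t)$ would be elliptic, contrary to the standing assumption of the paper that $(\phi_t)$ is non-elliptic. Therefore the case $\Omega\cong\D$ is incompatible with the hypothesis: once some step vanishes we are necessarily in the branch $\Omega\cong\C$, and there all steps vanish.

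I expect the delicate point to be exactly this last step, where one must convert a single interior fixed point of $\psi_u$ into a genuine fixed point of the whole semigroup $(\phi_t)$. The passage ``relatively compact orbit $\Rightarrow$ elliptic group $\Rightarrow$ common fixed point $q_0$'', together with the transport of $q_0$ back into $h(\D)$ via the spanning property, is what carries the argument, and it is precisely here that non-ellipticity is indispensable: for a nontrivial elliptic \emph{group} (a rotation model, with $\Omega\cong\D$) the quantity $k_\Omega(h(w),\psi_v(h(w)))$ vanishes only at the center, so the conclusion genuinely fails and must be excluded by hypothesis. The remaining ingredients---the transfer formula, the vanishing of $k_\C$, and the classification of one-parameter groups of $\mathrm{Aut}(\D)$ by their orbit behaviour---are routine.
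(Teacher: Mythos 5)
Your proof is correct, and its first half coincides with the paper's: both pass to the holomorphic model of Theorem~\ref{Thm:existmodel}, use Lemma~\ref{Lem:speed-and-model} (via \eqref{Eq:distance-limit}) to identify $s_v(\phi_t,w)=k_\Omega\bigl(h(w),\psi_v(h(w))\bigr)$, and observe that the case $\Omega\cong\C$ is trivial because $k_\C\equiv 0$. Where you genuinely diverge is in excluding $\Omega\cong\D$. The paper does it in one line: by non-ellipticity $w:=\phi_u(z)\neq z$, and $h$ is injective, so $h(z)$ and $h(w)$ would be two \emph{distinct} points of $\D$ at zero hyperbolic distance, which is impossible. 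You instead extract the fixed point $\psi_u(h(z))=h(z)$, upgrade it to a common fixed point $q_0$ of the whole group via orbit compactness and the classification of one-parameter groups of ${\sf Aut}(\D)$, pull $q_0$ back into $h(\D)$ using \eqref{Eq:duesemigroup}, and so manufacture a common fixed point of $(\phi_t)$ --- all of which is sound, but avoidable: from $\psi_u\circ h=h\circ\phi_u$ and injectivity of $h$, the identity $\psi_u(h(z))=h(z)$ gives $\phi_u(z)=z$ directly, and the equivalence recalled in the paper's introduction (a fixed point of a single $\phi_{t_0}$, $t_0>0$, forces a common fixed point of the semigroup) yields ellipticity at once; your detour in effect re-proves that equivalence inside the model machinery, which buys self-containedness at the price of length. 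Finally, note that both you and the paper quietly invoke non-ellipticity, which the literal statement of the corollary does not grant: for elliptic semigroups that are not groups the conclusion holds trivially by Remark~\ref{Rem:step-elliptic}, while for nontrivial elliptic \emph{groups} the statement genuinely fails (your rotation example, taking $z$ the common fixed point), exactly matching the paper's own tacit use of the hypothesis in the phrase ``since $(\phi_t)$ is not elliptic''; your explicit flagging of this, with the counterexample, is a point in your favor rather than a gap.
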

\begin{proof}
Let $(\Omega, h, \psi_t)$ be a holomorphic model for $(\phi_t)$ given by Theorem~\ref{Thm:existmodel}, with either $\Omega=\D$ or $\Omega=\C$.

Let $v\geq 0$, and $z\in \D$. Set $w:=\phi_v(z)$. By Lemma \ref{Lem:speed-and-model}
\begin{equation}\label{Eq:iper-step-dis}
\begin{split}
k_\Omega(h(z), h(w))&=\lim_{t\to +\infty}\omega(\phi_t(z), \phi_t(w))\\&=\lim_{t\to +\infty}\omega(\phi_t(z), \phi_{t+v}(z))=s_v(\phi_t,z).
\end{split}
\end{equation}
Assume $s_u(\phi_t,z)=0$ for some $u>0$ and $z\in \D$, and let $w:=\phi_u(z)$.
Note that $w\neq z$, since $(\phi_t)$ is not elliptic. Moreover, $h$ is injective, hence $h(z)\neq h(w)$. Therefore, by \eqref{Eq:iper-step-dis}, the domain $\Omega$ has two different points whose hyperbolic distance is zero. Hence $\Omega=\C$ and $k_\Omega\equiv 0$. Then by \eqref{Eq:iper-step-dis}
 the result holds.
\end{proof}

\begin{definition}
Let $(\phi_t)$ be a semigroup in $\D$. We say that $(\phi_t)$ is of {\sl positive hyperbolic step} (or it is of {\sl automorphic type}) \index{Positive hyperbolic step} \index{Semigroup of automorphic type} \index{Semigroup!of positive hyperbolic type} if there exist $z\in \D$ and $u\geq 0$ such that $s_u(\phi_t, z)>0$. Otherwise, we say that $(\phi_t)$ is of {\sl zero hyperbolic step} \index{Zero hyperbolic step} (or it is of {\sl non-automorphism type}). \index{Semigroup of non-automorphism type}\index{Semigroup!of zero hyperbolic type}
\end{definition}

By Remark \ref{Rem:step-elliptic}, groups of automorphisms of $\D$ are of positive hyperbolic step, while elliptic semigroups in $\D$ which are not groups, are always of zero hyperbolic step. By Remark \ref{Rem:step-elliptic} and Corollary \ref{Cor:indep-paso-iper} it also holds that if  $(\phi_t)$ is non-elliptic, then it is of positive hyperbolic step if and only if $s_u(\phi_t, z)> 0$ for some---and hence for all---$u> 0$ and $z\in \D$.

Let us denote by  $\Ha:=\{\zeta \in \C: \Re \zeta>0\}$, $\Ha^{-}:=\{\zeta\in \C: \Re \zeta<0\}$ and, given $\rho>0$, $\strip_\rho:=\{\zeta\in \C: 0<\Re \zeta<\rho\}$.

\begin{theorem}\label{Thm:modelholo}
Let $(\phi_t)$ be a  semigroup in $\D$. Then:
\begin{enumerate}
\item  $(\phi_t)$ is hyperbolic with spectral value $\lambda>0$ if and only if  it has a holomorphic model $(\strip_{\frac{\pi}{ \lambda}}, h, z\mapsto z+it)$.
\item  $(\phi_t)$ is parabolic of positive hyperbolic step if and only if it has a holomorphic model either of the form $(\Ha, h, z\mapsto z+it)$ or of the form $(\Ha^-, h, z\mapsto z+it)$.
\item $(\phi_t)$ is parabolic of zero hyperbolic step if and only if
it has a holomorphic model $(\C, h, z\mapsto z+it)$.
\end{enumerate}
\end{theorem}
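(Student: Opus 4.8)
The plan is to combine Theorem~\ref{Thm:existmodel}, which supplies a holomorphic model with base space biholomorphic either to $\D$ or to $\C$, with Corollary~\ref{Prop:zero-pos-semi-b} and Corollary~\ref{Cor:indep-paso-iper}, which detect the type of the semigroup from the divergence rate and hyperbolic step of the model. The strategy for each of the three cases is the same: start from an abstract model $(\Omega_0, h_0, \psi^0_t)$, determine whether $\Omega_0$ is $\C$ or $\D$, then explicitly identify the group $(\psi^0_t)$ of automorphisms and conjugate it by a biholomorphism to the concrete normal form $z\mapsto z+it$, transporting $\Omega_0$ to the appropriate strip, half-plane, or plane. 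The forward implications (concrete model $\Rightarrow$ type) are the easy direction and follow from computing the divergence rate of the explicit group via Lemma~\ref{Lem:speed-and-model}; the substance lies in the reverse implications.

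First I would dispose of the zero hyperbolic step case~(3). By Corollary~\ref{Cor:indep-paso-iper}, zero hyperbolic step forces $\Omega_0=\C$ (up to biholomorphism), since two distinct points at hyperbolic distance zero can only occur when $k_{\Omega_0}\equiv 0$. So $(\psi^0_t)$ is a continuous one-parameter group of automorphisms of $\C$, that is, a group of affine maps $z\mapsto a(t)z+b(t)$. The union condition~\eqref{Eq:duesemigroup} together with the fact that the semigroup is parabolic (divergence rate zero, by Corollary~\ref{Prop:zero-pos-semi-b}) will rule out the dilation part, leaving a pure translation group $z\mapsto z+b(t)$ with $b$ an additive continuous homomorphism, hence $b(t)=ct$ for some $c\in\C\setminus\{0\}$; a linear change of variable normalizes this to $z\mapsto z+it$. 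Conversely a model on $(\C, h, z\mapsto z+it)$ has $k_\C\equiv 0$, so every hyperbolic step vanishes and the semigroup is parabolic of zero hyperbolic step by the corollaries above.

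For cases~(1) and~(2) the base space is $\D$, equivalently the upper half-plane or $\UH$, and the positive hyperbolic step hypothesis guarantees (via Corollary~\ref{Cor:indep-paso-iper}) that $k_\Omega$ is a genuine metric. Here $(\psi^0_t)$ is a continuous group of automorphisms of $\D$ without interior fixed point, so it is a hyperbolic or parabolic one-parameter group of Möbius transformations. The idea is to pass to $\UH$ and write the generating vector field; a parabolic group is conjugate to $z\mapsto z+ct$ (translations along the boundary) while a hyperbolic group is conjugate to a scaling. The key computation is to read off the divergence rate of the \emph{ambient group} on the invariant domain $h(\D)$. In the hyperbolic case, Theorem~\ref{Thm:speed-dilation} gives $c_\D(\phi_t)=\tfrac12\lambda$, and by Lemma~\ref{Lem:speed-and-model} this equals $c_\Omega(\psi_t)$; the explicit half-plane model for a scaling semigroup identifies the invariant region $h(\D)$ as a strip, and matching the divergence rate pins the strip width to be exactly $\pi/\lambda$ under the normalization $z\mapsto z+it$. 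In the parabolic positive-step case the translation group leaves invariant a half-plane, and $h(\D)$ is a (right or left) half-plane $\Ha$ or $\Ha^{-}$; the two possibilities correspond to the two sides on which the orbit can accumulate.

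The main obstacle I anticipate is the precise determination of the invariant domain $h(\D)$ inside the normalized base space and the computation of its exact size. Knowing that $(\psi_t)$ is, say, a translation group is not enough: one must show that the image $h(\D)$ is genuinely a half-plane (in case~(2)) or that, in case~(1), conjugating the scaling group to the translation group $z\mapsto z+it$ turns the invariant domain into precisely the strip $\strip_{\pi/\lambda}$. The width of the strip is not free; it is forced by the hyperbolic geometry, and the cleanest way to nail it is to compute $c_\Omega(\psi_t)=\lim_{t}k_\Omega(h(0),\psi_t(h(0)))/t$ directly in the normalized coordinates and equate it, via Lemma~\ref{Lem:speed-and-model} and Theorem~\ref{Thm:speed-dilation}, to $\tfrac12\lambda$. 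Carrying out this hyperbolic-distance estimate inside a strip under the translation flow, and verifying that the growing union~\eqref{Eq:duesemigroup} indeed fills the whole normalized surface, is the technical heart of the argument.
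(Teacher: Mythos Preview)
Your high-level plan matches the paper's: invoke Theorem~\ref{Thm:existmodel} to get an abstract model on $\D$ or $\C$, use Lemma~\ref{Lem:speed-and-model} and Theorem~\ref{Thm:speed-dilation} to classify the automorphism group $(\psi_t)$ by its divergence rate, then conjugate to the normal form $z\mapsto z+it$. But you have misread what the theorem asserts, and this produces a phantom obstacle.

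The ``technical heart'' you describe does not exist, because the model $(\Omega,h,\psi_t)$ records the \emph{base space} $\Omega$, not the image $h(\D)$. You repeatedly write that one must show $h(\D)$ is a half-plane in case~(2) or a strip of the correct width in case~(1). This is false: $h(\D)$ can be any simply connected domain inside $\Omega$ satisfying the growing-union condition~\eqref{Eq:duesemigroup}. What has to be identified is $\Omega$ itself, and that comes for free from the conjugating biholomorphism. Concretely, in case~(1) the paper argues that $c_\Omega(\psi_t)=c_\D(\phi_t)=\tfrac12\lambda>0$ forces $\Omega=\D$ and shows, again by Theorem~\ref{Thm:speed-dilation}, that $(\psi_t)$ is a hyperbolic group on $\D$ with spectral value exactly $\lambda$. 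A M\"obius map sends this to $(\Ha,\,z\mapsto e^{\lambda t}z)$, and the explicit biholomorphism $f(z)=\frac{i}{\lambda}\log z+\frac{\pi}{2\lambda}$ carries $\Ha$ onto $\strip_{\pi/\lambda}$ while conjugating $e^{\lambda t}z$ to $z+it$. The width $\pi/\lambda$ is thus read off from $f$, with no hyperbolic-distance estimate in the strip required.

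There is also a gap in your treatment of case~(3). You propose to exclude an affine dilation part of $(\psi_t)$ on $\C$ by appealing to the divergence rate being zero. But $k_\C\equiv 0$, so \emph{every} group of automorphisms of $\C$ has divergence rate zero; this argument has no force. The correct exclusion (which the paper uses) is that a nontrivial dilation part would give $(\psi_t)$ a common fixed point $p\in\C$; the growing-union condition~\eqref{Eq:duesemigroup} then forces $p\in h(\D)$, and injectivity of $h$ together with $h\circ\phi_t=\psi_t\circ h$ produces a common fixed point of $(\phi_t)$ in $\D$, contradicting the assumption that $(\phi_t)$ is parabolic (hence non-elliptic).
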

\begin{proof}
(1) Suppose $(\phi_t)$ is hyperbolic, with Denjoy-Wolff point $\tau\in \partial \D$ and spectral value $\lambda>0$. By Theorem \ref{Thm:existmodel}, there exists a holomorphic model $(\Omega, h, \psi_t)$ for $(\phi_t)$ with $\Omega=\C$ or $\Omega=\D$. By \eqref{Eq:speed-divergence} and Lemma \ref{Lem:speed-and-model},  it holds
\[
0<\frac{1}{2}\lambda=c_\D(\phi_t)=c_\Omega(\psi_t).
\]
In particular, the hyperbolic distance of $\Omega$ is not identically zero, and therefore $\Omega=\D$. Moreover, since $c_\D(\psi_t)>0$ and, again by \eqref{Eq:speed-divergence}, $(\psi_t)$ is a hyperbolic group of automorphisms of $\D$ with the same spectral value $\lambda$. Using a M\"obius transformation $C$ such that $C(\D)=\Ha$, we can find an isomorphic holomorphic model for $(\phi_t)$ given by $(\Ha, C\circ h,  z\mapsto e^{\lambda t}z)$. Let $\log z$ denote the principal branch of the logarithm on $\Ha$, and let $f:\Ha \to \strip_{\frac{\pi}{\lambda}}$ be the biholomorphism given by
\begin{equation}\label{Eq:iper-converte-para}
f(z):=\frac{i}{\lambda}\log z + \frac{\pi}{2\lambda}.
\end{equation}
Note that $f(e^{\lambda t}z)=f(z)+it$ for all $z\in \Ha$ and $t\geq 0$. Therefore it is easy to see that $(\strip_{\frac{\pi}{\lambda}}, f\circ C\circ h, z\mapsto z+it)$ is a holomorphic model for $(\phi_t)$.

On the other hand, if $(\strip_{\frac{\pi}{\lambda}}, h, z\mapsto z+it)$ is a holomorphic model for $(\phi_t)$, let $f$ be the map defined in \eqref{Eq:iper-converte-para}. Then $(\Ha, f^{-1} \circ h, z\mapsto e^{\lambda t }z)$ is a holomorphic model for $(\phi_t)$. The group $(z\mapsto e^{\lambda t }z)$ is conjugated via a M\"obius transformation $C$ which maps $\D$ onto $\Ha$ to a hyperbolic group $(\psi_t)$ of $\D$ with spectral value $\lambda$. Therefore, $(\D, C^{-1} \circ f^{-1} \circ h, \psi_t)$ is a holomorphic model for $(\phi_t)$. By \eqref{Eq:speed-divergence} and Lemma \ref{Lem:speed-and-model}, it follows that $c_\D(\phi_t)=\frac{\lambda}{2}>0$ and hence, since $(\phi_t)$ can not be elliptic by Remark \ref{Rem:speed-elliptic}, $(\phi_t)$ is a hyperbolic semigroup in $\D$ with  spectral value $\lambda$.

(2)  Assume $(\phi_t)$ is parabolic with positive hyperbolic step. By Theorem \ref{Thm:existmodel}, there exists a holomorphic model $(\Omega, h, \psi_t)$, with $\Omega=\D$ or $\C$. By \eqref{Eq:distance-limit},
\[
k_\Omega(h(z), \psi_1(h(z)))=k_\Omega(h(z), h(\phi_1(z)))=\lim_{t\to \infty}\omega(\phi_t(z), \phi_{t+1}(z))=s_1(\phi_t,z)>0,
\]
 hence $\Omega=\D$. The group $(\psi_t)$ has divergence rate $c_\D(\psi_t)=c_\D(\phi_t)=0$ by Lemma \ref{Lem:speed-and-model}. Since $(\phi_t)$ has no common fixed point, $(\psi_t)$ is a group of parabolic automorphisms of $\D$. Conjugating with a suitable M\"obius transformation, it follows that a holomorphic model for $(\phi_t)$ is either $(\Ha, h, z\mapsto z+it)$ or $(\Ha^-, h, z\mapsto z+it)$. Conversely, if $(\phi_t)$ admits a holomorphic model of the forms $(\Ha, h, z\mapsto z+it)$ or $(\Ha^-, h, z\mapsto z+it)$, then by conjugating with a M\"obius transformation, it follows that $(\phi_t)$ has a holomorphic model $(\D, h, \psi_t)$ with $(\psi_t)$ a group of parabolic automorphisms of $\D$. By Lemma \ref{Lem:speed-and-model}, it follows then that $(\phi_t)$ is parabolic with positive hyperbolic step.

(3) The proof follows from an argument similar to (2), just noting that by Lemma \ref{Lem:speed-and-model}, $(\phi_t)$ is of zero hyperbolic step if and only if the domain $\Omega$ given in Theorem  \ref{Thm:existmodel} is $\C$. Moreover, $(\psi_t)$ can not have a fixed point in $\C$, for otherwise $(\phi_t)$ would be elliptic. Therefore $(\psi_t)$ is a group of translations in $\C$.
\end{proof}

The previous theorem has the following well-known consequence.

\begin{corollary}
Let $(\phi_t)$ be a hyperbolic semigroup in $\D$. Then $(\phi_t)$ is of positive hyperbolic step.
\end{corollary}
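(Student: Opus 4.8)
The plan is to deduce this immediately from the relationship between the divergence rate and the hyperbolic steps established earlier in the paper. Recall that a hyperbolic semigroup is by definition non-elliptic with spectral value $\lambda>0$, so Theorem \ref{Thm:speed-dilation} gives $c_\D(\phi_t)=\frac{1}{2}\lambda>0$. This positivity of the divergence rate is exactly the input I want to feed into the step machinery.

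First I would argue by contradiction. Suppose $(\phi_t)$ were of \emph{zero} hyperbolic step. By definition this means $s_u(\phi_t,z)=0$ for every $z\in\D$ and every $u\geq 0$. Fixing any $z\in\D$ and applying Proposition \ref{Prop:higher-step-speed}, we would then get
\[
c_\D(\phi_t)=\lim_{u\to+\infty}\frac{s_u(\phi_t,z)}{u}=0,
\]
contradicting $c_\D(\phi_t)=\frac{1}{2}\lambda>0$. Hence $(\phi_t)$ must be of positive hyperbolic step.

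Alternatively, one can run the argument through the holomorphic model, which is perhaps more in the spirit of this section. By Theorem \ref{Thm:modelholo}(1) a hyperbolic semigroup with spectral value $\lambda$ admits a model $(\strip_{\pi/\lambda}, h, z\mapsto z+it)$. Using the computation \eqref{Eq:iper-step-dis} (equivalently \eqref{Eq:distance-limit}), for any $z\in\D$ and $u>0$ one obtains
\[
s_u(\phi_t,z)=k_{\strip_{\pi/\lambda}}\bigl(h(z),\, h(z)+iu\bigr).
\]
Since the strip $\strip_{\pi/\lambda}$ is biholomorphic to $\D$, its hyperbolic distance is a genuine metric, hence strictly positive between the distinct points $h(z)$ and $h(z)+iu$; thus $s_u(\phi_t,z)>0$.

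I expect no genuine obstacle here, as the statement is essentially a repackaging of results already proved. The only conceptual point worth stressing is that hyperbolicity (equivalently $\lambda>0$) is precisely what forces $c_\D(\phi_t)>0$ via Theorem \ref{Thm:speed-dilation}; this is what excludes the degenerate base space $\Omega=\C$ (where $k_\Omega\equiv 0$) and thereby guarantees a strictly positive hyperbolic step.
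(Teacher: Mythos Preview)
Your proposal is correct. Your second, ``alternative'' argument is exactly the paper's proof: invoke Theorem~\ref{Thm:modelholo}(1) to get the strip model and then use \eqref{Eq:distance-limit} to compute $s_1(\phi_t,0)=k_{\strip_{\pi/\lambda}}(h(0),h(0)+i)>0$.

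Your primary argument, however, takes a genuinely different and shorter route: it bypasses the holomorphic model entirely, relying only on Section~2. From Theorem~\ref{Thm:speed-dilation} one has $c_\D(\phi_t)=\tfrac{1}{2}\lambda>0$, and Proposition~\ref{Prop:higher-step-speed} then forces $s_u(\phi_t,z)>0$ for some $u$. This is cleaner and more self-contained, but the paper's version is natural in context since the corollary is placed immediately after Theorem~\ref{Thm:modelholo} to illustrate the power of the model; your first proof would fit more comfortably right after Proposition~\ref{Prop:higher-step-speed} and Theorem~\ref{Thm:speed-dilation}.
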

\begin{proof}
By Theorem \ref{Thm:modelholo}, $(\phi_t)$ has a holomorphic model of the form $(\strip_\rho, h, z\mapsto z+it)$ for some $\rho>0$. In particular, by \eqref{Eq:distance-limit},
\[
s_1(\phi_t,0)=\lim_{t\to \infty}\omega(\phi_t(0), \phi_{t+1}(0))=k_{\strip_{\rho}}(h(0), h(\phi_1(0)))=k_{\strip_{\rho}}(h(0), h(0)+i)>0,
\]
hence $(\phi_t)$ is of positive hyperbolic step.
\end{proof}

\section{From K\"onigs functions to infinitesimal generators}
Theorem \ref{Thm:modelholo} provides simple holomorphic models for a non-elliptic semigroup $(\phi_t)$ in $\D$, which also give information on hyperbolic and dynamical properties of $(\phi_t)$. Indeed, the intertwining map $h$ (and its image) contains  all the information about the semigroup $(\phi_t)$. These models and maps deserve a special name:

\begin{definition} \label{Def:Koenigs_function}
Let $(\phi_t)$ be a semigroup in $\D$. The holomorphic model for $(\phi_t)$ given by Theorem \ref{Thm:modelholo} is called the {\sl canonical model} \index{Canonical model} of $(\phi_t)$ and the  intertwining map $h$ is the {\sl K\"{o}nigs function} \index{K\"{o}nigs function} associated with $(\phi_t)$.
\end{definition}

At a first sight, the use of the definite article ``the'' for denoting a canonical model might not seem to be a good idea. Indeed, although Theorem \ref{Thm:modelholo} assures that every canonical model for a given non-elliptic semigroup of $\D$ has the same  base space and  group of automorphisms,  a priori, there could be other ``K\"onigs functions''  intertwining the given semigroup with the same group of automorphisms. This is the case, in fact, but it turns out that all K\"onigs functions are essentially unique up to a constant, so that the use of the definite article ``the'' is well justified:

\begin{proposition}\label{Prop:unique-Koenigs}\index{K\"{o}nigs function!Uniqueness}
Let $(\phi_t)$ be a non-elliptic semigroup in $\D$. Let $(\Omega, h, \psi_t)$ be a canonical model for $(\phi_t)$ and let $\tilde{h}:\D \to \Omega$ be holomorphic.  Then
\begin{enumerate}
\item If $(\phi_t)$ is either  hyperbolic or parabolic of positive hyperbolic step, then  $\tilde{h}$ is a K\"onigs function for $(\phi_t)$ if and only if there exists $a\in \R$ such that $\tilde{h}(z)=h(z)+ai$ for all $z\in \D$.
\item If $(\phi_t)$ is  parabolic of zero hyperbolic step, then $\tilde{h}$ is a K\"onigs function for $(\phi_t)$ if and only if there exists $a\in \C$ such that $\tilde{h}(z)=h(z)+a$ for all $z\in \D$.
\end{enumerate}
\end{proposition}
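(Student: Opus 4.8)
The plan is to reduce the whole statement to the single assertion that a holomorphic function on $\D$ which is invariant under the flow $(\phi_t)$ must be constant, and then to extract the admissible constants from the exhaustion condition \eqref{Eq:duesemigroup}. First I would treat the implication that is not a mere verification. Suppose $\tilde{h}$ is a K\"onigs function, so that $(\Omega,\tilde{h},z\mapsto z+it)$ is a holomorphic model with the same base space and the same translation group as the canonical model $(\Omega,h,z\mapsto z+it)$. Writing the intertwining relation \eqref{Eq:unosemigroup} for both $h$ and $\tilde{h}$ yields $h(\phi_t(z))=h(z)+it$ and $\tilde{h}(\phi_t(z))=\tilde{h}(z)+it$, so the holomorphic function $F:=h-\tilde{h}:\D\to\C$ satisfies $F\circ\phi_t=F$ for every $t\ge 0$.

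The core step is to prove that $F$ is constant. Since $h$ is univalent I transport $F$ to $h(\D)$ by setting $P:=F\circ h^{-1}$. Using $h(\phi_t(z))=h(z)+it$ together with injectivity of $h$, one checks that $\phi_t(h^{-1}(w))=h^{-1}(w+it)$ whenever $w,w+it\in h(\D)$, so the invariance of $F$ translates into $P(w+it)=P(w)$ for $t\ge 0$ as long as both points lie in $h(\D)$. Now I invoke the growing-union structure recorded in the proof of Lemma \ref{Lem:speed-and-model}: the sets $\psi_{-t}(h(\D))=h(\D)-it$ increase with $t$ and exhaust $\Omega$ by \eqref{Eq:duesemigroup}. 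Hence for each $w\in\Omega$ there is $t_w\ge 0$ with $w+it\in h(\D)$ for all $t\ge t_w$, and I may define $\hat{P}(w):=P(w+it)$ for any such $t$; the translation invariance of $P$ makes this independent of the choice of $t$, so $\hat{P}$ is a well-defined holomorphic extension of $P$ to all of $\Omega$. Choosing $t$ large enough to place both $w$ and a vertical shift of it inside $h(\D)$ then shows $\hat{P}(w+is)=\hat{P}(w)$ for every $s\in\R$. Differentiating in $s$ gives $\hat{P}'\equiv 0$, so $\hat{P}$, and therefore $F$, is constant; thus $\tilde{h}=h+c$ for some $c\in\C$.

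Finally I would pin down the admissible $c$ and simultaneously settle the converse by means of \eqref{Eq:duesemigroup}. For $\tilde{h}=h+c$ one has $\bigcup_{t\ge 0}\psi_{-t}(\tilde{h}(\D))=\bigl(\bigcup_{t\ge 0}\psi_{-t}(h(\D))\bigr)+c=\Omega+c$, so the exhaustion axiom for $\tilde{h}$ holds precisely when $\Omega+c=\Omega$. When $\Omega=\strip_{\pi/\lambda}$, $\Ha$ or $\Ha^-$, that is, in the hyperbolic and the parabolic-positive-step cases, this forces $\Re c=0$, i.e. $c=ai$ with $a\in\R$, and conversely every purely imaginary $c$ leaves $\Omega$ invariant; when $\Omega=\C$, the parabolic-zero-step case, the condition is vacuous and $c\in\C$ is arbitrary. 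In each case the remaining model axioms for $\tilde{h}=h+c$, namely univalence and the intertwining relation \eqref{Eq:unosemigroup}, are immediate, which closes the equivalence.

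I expect the main obstacle to be exactly the constancy of $F$: the Cauchy--Riemann conclusion is routine, but the delicate point is the clean passage from the $\phi_t$-invariant function on $h(\D)$ to a genuinely vertically-invariant holomorphic function on the translation-invariant domain $\Omega$, which has to exploit both the monotonicity and the exhaustion contained in \eqref{Eq:duesemigroup}.
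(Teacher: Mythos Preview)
Your argument is correct, and it takes a genuinely different route from the paper. The paper invokes the abstract uniqueness of holomorphic models (imported from \cite{AroBra16}) to produce an automorphism $\nu:\Omega\to\Omega$ with $\tilde h=\nu\circ h$ and $\nu\circ\psi_t=\psi_t\circ\nu$; differentiating $\nu(z+it)=\nu(z)+it$ at $t=0$ gives $\nu'\equiv 1$, so $\nu(z)=z+c$, and then $\nu(\Omega)=\Omega$ forces $c\in i\R$ in the strip or half-plane cases. Your proof bypasses that black box entirely: you work directly with $F=h-\tilde h$, push it to $h(\D)$ via $h^{-1}$, and use the monotone exhaustion \eqref{Eq:duesemigroup} to extend to a vertically invariant holomorphic function on $\Omega$, which is then forced to be constant. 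The payoff is a self-contained argument that does not lean on the categorical uniqueness theorem; the paper's approach, in exchange, is shorter once that theorem is taken as given. Both proofs finish the same way, reading off the admissible constants from the condition $\Omega+c=\Omega$.
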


\begin{proof}
The ``if'' implications of the statements are clear. Thus, assume $\tilde{h}$ is a K\"onigs function for $(\phi_t)$. Then, there exists an automorphism $\nu:\Omega\to \Omega$ such that $\tilde{h}=\nu \circ h$ and for all $t\geq 0$ it holds $\nu \circ \psi_t=\psi_t \circ \nu$.
Since $(\phi_t)$ is non-elliptic, for all $z\in \Omega$ and all $t\geq 0$ it holds
\begin{equation}\label{Eq:non-elliptic-funct}
\nu(z+it)=\nu(z)+it.
\end{equation}
Differentiating \eqref{Eq:non-elliptic-funct} in $t$ and setting $t=0$ we obtain $\nu'(z)\equiv 1$. Integrating in $z$, we obtain $\nu(z)=z+c$ for some $c\in \C$.
Moreover, if $(\phi_t)$ is either hyperbolic or parabolic of positive hyperbolic step, since $\Omega$ is a half plane or a strip, and $\nu(\Omega)=\Omega$, then $c$  is pure imaginary. From this, (1) and (2) hold.
\end{proof}

K\"onigs functions associated with non-elliptic semigroups are related with a very specific type of univalent functions. We start with the following simple result.

\begin{proposition}\label{Prop:h-limit-tau}
Let $(\phi_t)$ be a non-elliptic semigroup in $\D$ with Denjoy-Wolff point $\tau\in \de\D$. Let $h$ be the associated K\"onigs function. Then
\begin{equation}\label{Eq:lim-Im-h-at-DW}
\limsup_{z\to \tau} \Im h(z)=+\infty.
\end{equation}
\end{proposition}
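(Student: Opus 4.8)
The plan is to exhibit an explicit sequence of points tending to $\tau$ along which $\Im h$ diverges to $+\infty$; since the limit superior dominates the limit along any such sequence, this immediately forces \eqref{Eq:lim-Im-h-at-DW}.

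First I would unwind the defining property of the K\"onigs function. By Theorem \ref{Thm:modelholo}, the canonical model of the non-elliptic semigroup $(\phi_t)$ has base space $\Omega\in\{\strip_{\pi/\lambda},\Ha,\Ha^-,\C\}$ and group $\psi_t\colon z\mapsto z+it$. Hence the intertwining relation \eqref{Eq:unosemigroup} reads $h(\phi_t(z))=h(z)+it$ for all $z\in\D$ and $t\geq 0$; taking imaginary parts yields
\begin{equation*}
\Im h(\phi_t(z))=\Im h(z)+t,\qquad z\in\D,\ t\geq 0.
\end{equation*}

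Next I would fix an arbitrary base point $z_0\in\D$ and follow its orbit. Since $(\phi_t)$ is non-elliptic with Denjoy-Wolff point $\tau\in\de\D$, the continuous Denjoy-Wolff theorem gives $\phi_t(z_0)\to\tau$ as $t\to+\infty$. Writing $w_t:=\phi_t(z_0)$, the displayed identity shows $\Im h(w_t)=\Im h(z_0)+t\to+\infty$ while $w_t\to\tau$. Specializing to $t=n\in\N$ produces a sequence $w_n\to\tau$ with $\Im h(w_n)\to+\infty$, so that $\limsup_{z\to\tau}\Im h(z)=+\infty$, which is exactly \eqref{Eq:lim-Im-h-at-DW}.

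There is essentially no obstacle here: the whole content is the interplay of the translation form of $\psi_t$ with the convergence of orbits to the Denjoy-Wolff point. The only point deserving a word of care is that the limit superior is taken over all unrestricted approaches to $\tau$, so one must observe that it bounds from below the limit along the particular orbit sequence $(w_n)$; since that limit is already $+\infty$, the limit superior is forced to be $+\infty$ as well.
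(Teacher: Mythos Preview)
Your argument is correct and is essentially identical to the paper's own proof: both follow an orbit $t\mapsto\phi_t(z_0)$ (the paper takes $z_0=0$), use the intertwining relation $h\circ\phi_t=h+it$ to see that $\Im h$ grows linearly along the orbit, and invoke the continuous Denjoy--Wolff theorem to conclude that this orbit tends to~$\tau$. The only cosmetic differences are your use of a general base point and the (harmless but unnecessary) specialization to integer times.
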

\begin{proof}
Let  $\gamma:[0,+\infty)\to \D$ be the continuous curve defined by $\gamma(t):=\phi_t(0)$. Note that $\lim_{t\to+\infty}\gamma(t)=\tau$ by the continuous Denjoy-Wolff theorem. Hence,
\[
\lim_{t\to+\infty}  \Im h(\gamma(t))=\lim_{t\to+\infty}  \Im h(\phi_t(0)))=\lim_{t\to+\infty}  \Im (h(0)+it)=+\infty,
\]
and the result holds. \end{proof}

\begin{remark}\label{Rem:behavior h not good DW}
Arguing as in the proof of the previous proposition and using the Lehto-Virtanen's Theorem, it follows that under the hypotheses of Proposition~\ref{Prop:h-limit-tau},
\[
\angle\lim_{z\to \tau}h(z)=\infty,
\]
where the limit has to be understood in $\C_\infty$. However, such a condition by itself does not characterize the Denjoy-Wolff point of $(\phi_t)$, while \eqref{Eq:lim-Im-h-at-DW} does, as we will see later on.
\end{remark}

\begin{definition}
A domain $\Omega\subset \C$  is {\sl starlike at infinity} if  $(\Omega+it)\subseteq \Omega$ for all $t\geq 0$.

A map $h:\D \to \C$ is a {\sl starlike at infinity with respect  to $\sigma\in \partial \D$} \index{Starlike at infinity} if it is univalent,  $\limsup_{z\to \sigma}\Im h(z)=+\infty$ in $\C_\infty$ and $h(\D)$ is starlike at infinity.
\end{definition}

Maps which are starlike at infinity and K\"{o}nigs functions of non-elliptic semigroups in $\D$ are one and the same:

\begin{theorem}\label{Thm:estrella-infinito-koenigs}
Let $h:\D \to \C$ be the K\"{o}nigs function of a non-elliptic semigroup $(\phi_t)$ in $\D$ with Denjoy-Wolff point $\tau\in \partial \D$. Then $h$ is starlike at infinity with respect to $\tau$.

Conversely, if $h:\D \to \C$ is starlike at infinity with respect to $\tau\in \partial \D$, let $\phi_t(z):=h^{-1}(h(z)+it)$ for $t\geq 0$. Then $(\phi_t)$ is a non-elliptic semigroup  in $\D$ with Denjoy-Wolff point $\tau$. Moreover, let $a=\inf_{z\in \D} \Re h(z)$ and $b=\sup_{z\in \D} \Re h(z)$, then
\begin{enumerate}
\item if $a=-\infty, b=+\infty$, then $(\phi_t)$ is a parabolic semigroup of zero hyperbolic step and $h$ is its K\"onigs function,
\item if  $a=-\infty$ and $b<+\infty$, then $(\phi_t)$ is a parabolic semigroup of positive hyperbolic step, and its canonical model is $(\Ha^-, h-b, z\mapsto z+it)$,
\item if $a>-\infty$ and $b=+\infty$, then $(\phi_t)$ is a parabolic semigroup of positive hyperbolic step, and its canonical model is $(\Ha, h-a, z\mapsto z+it)$,
\item if $a>-\infty$ and $b<+\infty$ then $h$ is a hyperbolic semigroup with spectral value $\lambda=\frac{\pi}{b-a}$ and $h-a$ is its K\"onigs function.
\end{enumerate}
\end{theorem}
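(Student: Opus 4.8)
The plan is to treat the two implications separately. The direct implication is immediate from the structure theorem for canonical models; for the converse I would build the canonical model of the candidate semigroup by ``vertically saturating'' the image $h(\D)$, read off the four cases from the resulting base space, and finally identify the Denjoy--Wolff point, which is the only delicate point.

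\emph{Direct implication.} Suppose $h$ is the K\"onigs function of $(\phi_t)$. By definition $h$ is univalent, and by Theorem \ref{Thm:modelholo} the canonical model is $(\Omega,h,z\mapsto z+it)$ with $\Omega$ one of $\strip_{\frac{\pi}{\lambda}}$, $\Ha$, $\Ha^-$, $\C$, each of which is starlike at infinity. The intertwining relation \eqref{Eq:unosemigroup}, which here reads $h\circ\phi_t=h+it$, gives $h(\D)+it=h(\phi_t(\D))\subseteq h(\D)$ for every $t\ge0$, since $\phi_t(\D)\subseteq\D$; hence $h(\D)$ is starlike at infinity. Finally $\limsup_{z\to\tau}\Im h(z)=+\infty$ is exactly Proposition \ref{Prop:h-limit-tau}, so $h$ is starlike at infinity with respect to $\tau$.

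\emph{Converse: the semigroup and its model.} Let now $h$ be starlike at infinity with respect to $\tau$ and set $\phi_t(z):=h^{-1}(h(z)+it)$. Starlikeness at infinity of $h(\D)$ guarantees $h(z)+it\in h(\D)$, so each $\phi_t$ is a well-defined holomorphic self-map of $\D$; the identity $h(z)+i(t+s)=h(\phi_s(z))+it$ yields the semigroup law $\phi_{t+s}=\phi_t\circ\phi_s$, continuity in $t$ follows from continuity of $h^{-1}$, and $\phi_0=\id$. A common fixed point $x$ would force $h(x)+it=h(x)$, impossible for $t>0$, so $(\phi_t)$ is non-elliptic; and since $h$ is open, $\Re h(\D)$ is the open interval $(a,b)$. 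I would then recover the canonical model by forming the vertical saturation $\Omega_0:=\bigcup_{t\ge0}\bigl(h(\D)-it\bigr)$: because $h(\D)$ is up-closed, $\Omega_0$ is invariant under \emph{all} vertical translations, hence equals the full vertical strip $\{z:a<\Re z<b\}$ (a half-plane or $\C$ in the degenerate cases), with real-part range again $(a,b)$. Choosing the real constant $c$ normalizing $\Omega_0-c$ to a standard base space of Theorem \ref{Thm:modelholo} ($c=0$ if $\Omega_0=\C$; $c=a$ for $\Ha$ and for the strip; $c=b$ for $\Ha^-$), one checks from $h\circ\phi_t=h+it$ and $\bigcup_{t\ge0}\psi_{-t}\bigl((h-c)(\D)\bigr)=\Omega_0-c$ that $(\Omega_0-c,\,h-c,\,z\mapsto z+it)$ satisfies Definition \ref{Def:semi-model}. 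Reading off $\Omega_0-c$ from the signs of $a,b$ and invoking Theorem \ref{Thm:modelholo} gives the type and model in cases (1)--(4); the strip width $b-a$ gives $\lambda=\frac{\pi}{b-a}$ in (4), and since $c\in\R$ the map $h-c$ has the same imaginary part as $h$, matching the assertions about the K\"onigs function.

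\emph{Converse: the Denjoy--Wolff point, the main obstacle.} It remains to show the Denjoy--Wolff point is $\tau$. Let $\sigma\in\partial\D$ be that point (it lies on the boundary since $(\phi_t)$ is non-elliptic). Along the orbit $\phi_t(0)\to\sigma$ we have $\Im h(\phi_t(0))=\Im h(0)+t\to+\infty$, so $\sigma$ lies in $A:=\{\xi\in\partial\D:\limsup_{z\to\xi}\Im h(z)=+\infty\}$; by hypothesis $\tau\in A$ too. Everything therefore reduces to proving that $A$ is a single point, and this is where starlikeness at infinity is essential. Since the real projection of $h(\D)$ is \emph{all} of $(a,b)$, the floor $\beta(x):=\inf\{y:x+iy\in h(\D)\}$ is finite for every $x\in(a,b)$, so $h(\D)$ contains no upward vertical slit and, above any height exceeding $\sup_{[c,d]}\beta$, it contains the whole half-strip over $[c,d]$. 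Consequently the ``upper end'' of $h(\D)$ is connected and is a single prime end; by Carath\'eodory's prime-end theorem $h^{-1}$ carries every approach with $\Im h\to+\infty$ to one and the same boundary point, namely $\sigma$. Hence $A=\{\sigma\}$ and $\tau=\sigma$. I expect the genuine difficulty to be exactly this prime-end analysis: controlling, uniformly in the height, the hyperbolic distance in $h(\D)$ between two points lying high up, so as to conclude that every sequence on which $\Im h\to+\infty$ is sent by $h^{-1}$ to the single upper boundary point.
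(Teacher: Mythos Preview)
Your direct implication and your construction of the canonical model in the converse coincide with the paper's argument almost verbatim: same use of Theorem~\ref{Thm:modelholo} and Proposition~\ref{Prop:h-limit-tau}, and the same identification of $\Omega_0=\bigcup_{t\ge0}(h(\D)-it)$ as the vertical strip $\{a<\Re z<b\}$, from which cases (1)--(4) are read off.

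The only substantive difference is in pinning down the Denjoy--Wolff point. You propose a prime-end argument and, quite rightly, flag as the crux the claim that the ``upper end'' of $h(\D)$ is a \emph{single} prime end, i.e.\ that every sequence with $\Im h\to+\infty$ is sent by $h^{-1}$ to one fixed boundary point. The paper sidesteps this analysis by invoking the \emph{no Koebe arcs theorem} for univalent maps instead. Assuming the Denjoy--Wolff point $\sigma$ differs from $\tau$, one has sequences $z_n\to\tau$ and $w_n\to\sigma$ with $\Im h(z_n),\Im h(w_n)\to+\infty$; starlikeness at infinity allows one to join $h(z_n)$ to $h(w_n)$ by an arc $\Gamma_n\subset h(\D)$ with $\min_{\Gamma_n}\Im=\min\{\Im h(z_n),\Im h(w_n)\}\to+\infty$ (your half-strip observation furnishes exactly such arcs). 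The preimages $C_n:=h^{-1}(\Gamma_n)$ then have Euclidean diameter bounded below (they connect points near the distinct boundary points $\tau$ and $\sigma$), they accumulate on $\partial\D$, and $h(C_n)=\Gamma_n\to\infty$ in $\C_\infty$; so $\{C_n\}$ is a sequence of Koebe arcs for the univalent map $h$, which is impossible. The two routes are morally equivalent---the no-Koebe-arcs theorem is a close relative of prime-end theory via normality of univalent functions---but the Koebe-arc formulation converts the step you were worried about into a one-line appeal to a classical result, with no need to build crosscuts or estimate hyperbolic distances at height.
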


\begin{proof}
Let $h$ be the K\"{o}nigs function of a non-elliptic semigroup $(\phi_t)$ in $\D$ with Denjoy-Wolff point $\tau\in \partial \D$. By  Theorem \ref{Thm:modelholo}, for all $t>0$
\[
h(\D)+it=h(\phi_t(\D))\subset h(\D),
\]
hence $h(\D)$ is starlike at infinity. Moreover, $\limsup_{z\to \tau} \Im h(z)=+\infty$ by Proposition~\ref{Prop:h-limit-tau}. Hence $h$ is starlike at infinity with respect to $\tau$.

Conversely, let us assume $h$ is starlike at infinity with respect to $\tau$. Define $\phi_t(z):=h^{-1}(h(z)+it)$, for all $z\in \D$ and $t\geq 0$. Now, it is clear that $(\phi_t)$ is a (continuous) semigroup in $\D$, without fixed points in $\D$.

First, note that if $z_0\in h(\D)$ then $\cup_{t\geq 0}(z_0-it)=\{z\in \C: \Re z=z_0, \Im z\in (-\infty, \Im z_0]\}$. Since moreover $h(\D)+it\subset h(\D)$ for all $t\geq 0$, it follows that if $z_0\in h(\D)$ then $\cup_{t\geq 0}(h(\D)-ti)$ contains the line $\{z\in \C: \Re z=\Re z_0\}$.  Therefore,  bearing in mind that $h(\D)$ is connected,
\[
\Omega:=\cup_{t\geq 0}(h(\D)-ti)=\{z\in \C: a<\Re z <b\}.
\]
From this and from Theorem \ref{Thm:modelholo}, taking into account that  $h+\alpha$ satisfies \eqref{Eq:unosemigroup} for all $\alpha\in \R$,  implications (1), (2), (3) and (4) follow easily. As an example, let us prove (2). In this case, $\Omega=\{z\in \C: \Re z<b\}$. Therefore, if we let $\tilde{h}:=h-b$, it follows that $\tilde{h}$ satisfies \eqref{Eq:unosemigroup}, and since $\cup_{t\geq 0}(\tilde{h}(\D)-it)=\Ha^-$,  \eqref{Eq:duesemigroup} is satisfied as well. Therefore, $(\Ha^-, h-b, z\mapsto z+it)$ is the canonical model of $(\phi_t)$, and $(\phi_t)$ is a parabolic semigroup with positive hyperbolic step by the Theorem \ref{Thm:modelholo}.

Finally, we are left to show that $\tau$ is the Denjoy-Wolff point of $(\phi_t)$.

Assume this is not the case, and let $\sigma\in \partial \D$ be the Denjoy-Wolff point of $(\phi_t)$. Let $\tilde{h}$ be the K\"onigs function of $(\phi_t)$. By Proposition~\ref{Prop:h-limit-tau}, there exists a sequence $\{w_n\}\subset \D$ converging to $\sigma$ such that $\lim_{n\to \infty}\Im \tilde{h}(w_n)=+\infty$. As we already noticed above, $h=\tilde{h}-\alpha$ for some $\alpha\in \R$, therefore, $\lim_{n\to \infty}\Im \tilde{h}(w_n)=+\infty$ as well.

Since $h$ is starlike at infinity with respect to $\tau$,  there exists a sequence $\{z_n\}\subset \D$ converging to $\tau$ such that $\lim_{n\to \infty}\Im h(z_n)=+\infty$.

The connected domain $h(\D)$ being  starlike at infinity,  there exists a curve $\Gamma_n\subset h(\D)$ joining $z_n$ to $w_n$ such that
\[
\min_{\zeta\in \Gamma_n} \Im \zeta=\min\{\Im h(z_n), \Im h(w_n)\}.
\]
In particular, for every $R>0$ there exists $n_R\in \N$ such that for all $n\geq n_R$ and all $\zeta \in \Gamma_n$, it holds $\Im \zeta\geq R$. This implies that if $\{\zeta_n\}$ is a sequence such that $\zeta_n\in \Gamma_n$ for all $n\in \N$, then
\begin{equation}\label{Eq:wn-goes-infty-Konigs}
\lim_{n\to \infty} \zeta_n=\infty \quad \hbox{\ in \ } \C_\infty.
\end{equation}

Now, let $C_n:=h^{-1}(\Gamma_n)$. By construction, $C_n$ joins $z_n$ to $w_n$, therefore there exists $K>0$ such that $\hbox{diam}_{\mathcal E}(C_n)\geq K$ for all $n\in \N$. Moreover, for any sequence $\{\xi_n\}$ such that $\xi_n\in C_n$, it holds $h(\xi_n)\in \Gamma_n$. Hence,  by \eqref{Eq:wn-goes-infty-Konigs}, the sequence $\{h(\xi_n)\}$ converges to $\infty$ in $\C_\infty$.

Therefore, $\{C_n\}$ is a sequence of Koebe arcs for $h$, contradicting the no Koebe arcs Theorem. Hence $\tau=\sigma$ and we are done.
\end{proof}

Next theorem was first proved under slightly different hypotheses in \cite{Gru71} for holomorphic functions from the upper half-plane and in \cite{Cio78} for the unit disc case.

\begin{theorem}\label{Thm:starlike-infinity}
Let $h:\D \to \C$ be  non-constant and holomorphic  and let $\sigma\in \partial \D$. Then $h$ is starlike at infinity \index{Starlike at infinity} with respect to $\sigma$ if and only if for all $z\in \D$ it holds
\begin{equation}\label{Eq:starlike-at-infinity}
\Im [\overline{\sigma}(\sigma-z)^2h'(z)]\geq 0
\end{equation}
Moreover, equality holds at some --- and hence any --- $z\in \D$ if and only if
\begin{equation}\label{Eq:starlike-inf-igual}
h(z)=a\frac{\sigma+z}{\sigma-z}+c, \quad z\in \D
\end{equation}
for some $a\in \R\setminus\{0\}$ and $c\in \C$.
\end{theorem}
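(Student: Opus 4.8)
The plan is to transport the entire statement to the right half–plane by a Cayley map, where the differential inequality becomes completely transparent, prove the two implications by soft dynamical and geometric arguments, and settle the equality case with the minimum principle.

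\emph{First reduction.} I would set $C(z)=\frac{\sigma+z}{\sigma-z}$, a biholomorphism of $\D$ onto $\Ha=\{\Re>0\}$ with $C(\sigma)=\infty$, and put $H:=h\circ C^{-1}:\Ha\to\C$. Since $C'(z)=\frac{2\sigma}{(\sigma-z)^2}$, a direct computation gives $\overline\sigma(\sigma-z)^2h'(z)=2H'(w)$ for $w=C(z)$, so that
\[
\Im\bigl[\overline\sigma(\sigma-z)^2h'(z)\bigr]=2\,\Im H'(w),\qquad w=C(z).
\]
The image $H(\Ha)=h(\D)$ is unchanged and $\limsup_{z\to\sigma}\Im h=\limsup_{w\to\infty,\,w\in\Ha}\Im H$. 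Thus \eqref{Eq:starlike-at-infinity} is equivalent to $\Im H'(w)\ge0$ on $\Ha$, and the statement becomes: $H$ is univalent with $H(\Ha)$ starlike at infinity and $\Im H$ unbounded above near $\infty$ iff $\Im H'\ge0$. I would also record that in $\Ha$ the horocycles at the boundary point $\infty$ are exactly the half–planes $\{\Re w>c\}$, because $\frac{|\sigma-z|^2}{1-|z|^2}=1/\Re w$.

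\emph{Forward implication.} Assuming $h$ is starlike at infinity with respect to $\sigma$, Theorem \ref{Thm:estrella-infinito-koenigs} gives that $\phi_t(z):=h^{-1}(h(z)+it)$ is a non-elliptic semigroup with Denjoy--Wolff point $\sigma$; conjugating by $C$ produces a semigroup $(\Psi_t)$ of $\Ha$ with $H(\Psi_t(w))=H(w)+it$ and Denjoy--Wolff point $\infty$. Differentiating this relation at $t=0$ yields $\dot\Psi_0(w)=i/H'(w)$. By Wolff's lemma at $\infty$ each horocycle $\{\Re>c\}$ is mapped into itself, i.e.\ $\Re\Psi_t(w)\ge\Re w$ for all $t\ge0$; hence $t\mapsto\Re\Psi_t(w)$ is nondecreasing and its right derivative at $0$ is nonnegative, so
\[
0\le\Re\bigl(i/H'(w)\bigr)=\frac{\Im H'(w)}{|H'(w)|^2},
\]
which is exactly $\Im H'(w)\ge0$. (Equivalently, this is the sign of the $t=0$ derivative of the Julia quotient $\frac{|\sigma-\phi_t(z)|^2}{1-|\phi_t(z)|^2}$ on $\D$.)

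\emph{Reverse implication.} Now assume $\Im H'\ge0$. Since $\Im H'$ is harmonic and nonnegative, either $\Im H'\equiv0$ (the equality case, treated below) or $\Im H'>0$ throughout. In the latter case univalence is immediate from $H(w_2)-H(w_1)=(w_2-w_1)\int_0^1H'\bigl((1-s)w_1+sw_2\bigr)\,ds$, whose integral has strictly positive imaginary part and so is nonzero. The key point is that the pulled–back field $V(w):=i/H'(w)$ satisfies $\Re V=\Im H'/|H'|^2\ge0$, so along any integral curve $\dot g=V(g)$ one has $\frac{d}{ds}\Re g\ge0$; hence $\Re g(s)\ge\Re g(0)>0$ and the curve can never reach the imaginary axis, while $H(g(s))=H(g(0))+is$ realizes the upward vertical ray from $H(w_0)$ inside $H(\Ha)$ for as long as the curve exists. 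To obtain $H(\Ha)+it\subseteq H(\Ha)$ I must exclude that such a ray leaves $H(\Ha)$ at finite height, i.e.\ that $g(s)\to\infty$ in $\Ha$ at a finite time while $H(g(s))$ stays bounded; \textbf{this is the main obstacle}, and it is precisely a Koebe arc for the univalent $H$, ruled out by the no–Koebe–arcs theorem exactly as in Theorem \ref{Thm:estrella-infinito-koenigs}. Granting this, $H(\Ha)$ is starlike at infinity, and since $\Re g$ is nondecreasing the preimage of each ray escapes to $\infty$ (never to the imaginary axis), forcing $\limsup_{w\to\infty}\Im H=+\infty$, i.e.\ $\limsup_{z\to\sigma}\Im h=+\infty$.

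\emph{Equality case.} Finally I would work directly on $\D$ with the holomorphic function $\Phi(z):=\overline\sigma(\sigma-z)^2h'(z)$, so that $\Im\Phi$ is harmonic. If \eqref{Eq:starlike-at-infinity} holds and $\Im\Phi$ vanishes at one interior point, the minimum principle forces $\Im\Phi\equiv0$ (so equality at one point is equality at every point); then $\Phi\equiv2a$ for some real constant $a$, with $a\ne0$ since $h$ is non-constant. Integrating $h'(z)=\frac{2a\sigma}{(\sigma-z)^2}$ gives $h(z)=a\frac{\sigma+z}{\sigma-z}+c$, which is \eqref{Eq:starlike-inf-igual}; the converse is the direct computation $\overline\sigma(\sigma-z)^2h'(z)\equiv2a\in\R$. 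As indicated, all steps are short once the half–plane normalization and Wolff's lemma are in place, the only genuinely delicate point being the forward-completeness of the flow of $V$ in the reverse implication.
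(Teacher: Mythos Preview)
Your reduction via the Cayley transform and the treatment of the equality case are correct and match the paper. Your forward implication (starlike at infinity $\Rightarrow$ inequality) is also correct and in fact slightly slicker than the paper's: both use Theorem~\ref{Thm:estrella-infinito-koenigs} to produce the semigroup and then Julia--Wolff horocycle invariance, but you differentiate the inequality $\Re\Psi_t(w)\ge\Re w$ at $t=0$ to obtain $\Im H'\ge0$ directly, whereas the paper deduces it via monotonicity of $r\mapsto\Re g(y+ir)$.

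The reverse implication, however, has a genuine gap. The scenario you describe --- the flow $g(s)$ escapes to $\infty$ in $\Ha$ at a finite time $T$ while $H(g(s))\to H(w_0)+iT$ --- is \emph{not} a Koebe arc. Pulled back to $\D$, it is a single curve tending to the single boundary point $\sigma$ along which $h$ has a finite asymptotic value; this is perfectly compatible with univalence and contradicts nothing. The no-Koebe-arcs theorem requires arcs whose Euclidean diameters stay bounded away from zero (equivalently, arcs clustering on a nondegenerate boundary continuum), which is exactly how it is used in Theorem~\ref{Thm:estrella-infinito-koenigs}, where the arcs connect neighborhoods of two \emph{distinct} boundary points $\tau$ and $\sigma$. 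Your situation is not of that type.

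Your flow argument can be repaired, but not with Koebe arcs: if the ray from $H(w_0)$ exits $H(\Ha)$ at height $T_0$, the flow gives $h$ the asymptotic value $H(w_0)+iT_0$ at $\sigma$, hence (Lehto--Virtanen) the angular limit $\angle\lim_{z\to\sigma}h(z)=H(w_0)+iT_0$; repeating this with any $w_1$ satisfying $\Re H(w_1)\neq\Re H(w_0)$ produces either a second, different finite angular limit or (if that ray never exits) the angular limit $\infty$, and either way contradicts uniqueness. The paper avoids this detour entirely with a direct geometric argument: from $\Im H'>0$ one has $\partial_r\Re H(y+ir)=-\Im H'(y+ir)<0$, so the image of each vertical line $\{\Re w=y\}$ meets every vertical line at most once; since $H$ preserves orientation, the region $H(\{\Re w>y\})$ lies on the ``upper'' side of this curve, giving $H(\{\Re w>y\})+it\subset H(\{\Re w>y\})$ and hence $H(\Ha)+it\subset H(\Ha)$ with no ODE or boundary-behaviour input. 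The $\limsup$ condition then follows from the monotonicity of $x\mapsto\Im H(x+iy_0)$.
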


\begin{proof}
Let $C_\sigma:\D \to \Ha$ be the Cayley transform with respect to $\sigma$ given by $\sigma(z)=\frac{\sigma+z}{\sigma-z}$.  Given $h:\D \to \C$ holomorphic, we define a new holomorphic map $g:\Ha \to \C$ by setting $g(w):=h(C_\sigma^{-1}(w))$. Thus, writing $z=C_\sigma^{-1}(w)$,
\[
g'(w)=h'(C_\sigma^{-1}(w))\cdot(C_\sigma^{-1}(w))'=\frac{h'(z)}{C_\sigma'(z)}=\frac{\overline{\sigma}}{2}(\sigma-z)^2h'(z).
\]
Therefore, \eqref{Eq:starlike-at-infinity} is equivalent to
\begin{equation}\label{Eq:starlike-at-infinity-semiplan}
\Im g'(w)\geq 0, \quad \hbox{for all \ }w\in \Ha.
\end{equation}

Moreover, it is easy to see that $h$ is starlike at infinity with respect to $\sigma\in \partial \D$ if and only if $g$ is {\sl starlike at infinity}, namely, $g$ is univalent, for every $t\geq 0$ it holds  $g(\Ha)+it\subset g(\Ha)$ and $\limsup_{w\to \infty}\Im g(w)=+\infty$.

Thus, in order to prove the result, we have to prove that $g$ is starlike at infinity if and only if $g$ is not constant and \eqref{Eq:starlike-at-infinity-semiplan} holds.

To start with, assume that \eqref{Eq:starlike-at-infinity-semiplan} holds. First of all, note that if there exists $w_0\in \Ha$ such that $\Im g'(w_0)=0$, by the Maximum Principle for harmonic functions, it holds $\Im g'(w)\equiv 0$, and hence $g'(w)\equiv a$ for some $a\in \R$. Namely, $g(w)=aw+b$ for some $b\in \C$. Since $g$ is not constant, $a\neq 0$ and it is easy to see that $g$ is starlike at infinity. Moreover, a direct computation shows that $h$ has the form \eqref{Eq:starlike-inf-igual}.

Assume now that  $\Im g'(w)>0$ for all $w\in \Ha$.  By Noshiro's Theorem, the function $g$ is univalent. Also, let $y>0$ and $r\in \R$. Since
\[
\frac{\de}{\de r} \Re g(y+ir)=\Re(i g'(y+ir))=-\Im g'(y+ir)<0,
\]
it follows that the function $\R\ni r\mapsto \Re g(y+ir)$ is decreasing.  This implies that the curve $\R\ni r\mapsto g(y+ir)$, that parametrizes $\partial g(E^\Ha(\infty,y))$, intersects every vertical line at most in one point --- here  $E^\Ha(\infty,y)$ denotes the horocycle of $\Ha$ at $\infty$  of radius $y$.
Therefore, either $g(y+ir)+it\in g(E^\Ha(\infty, y))$ for all $t<0$ or $g(y+ir)+it\in g(E^\Ha(\infty, y))$ for all $t>0$. However, the first possibility is excluded because $g$ preserves the orientation. Therefore, for every $t\geq 0$, and for every $y>0$ it holds
\begin{equation}\label{Eq:g-go-inside}
g(E^\Ha(\infty, y))+it\subset g(E^\Ha(\infty,y)).
\end{equation}

By the arbitrariness of $y$, it follows that $g(\Ha)+it\subset g(\Ha)$ for all $t\geq 0$. Therefore, in order to prove that $g$ is starlike at infinity, it remains only to show that $\limsup_{w\to \infty}\Im g(w)=+\infty$. Suppose by contradiction that $\limsup_{w\to \infty}\Im g(w)=A<+\infty$. We claim that this implies that $g(\Ha)$ is contained in $\{w\in \C: \Im w<A\}$, which is clearly impossible since $g(\Ha)$ is starlike at infinity. Suppose the claim is false. Hence there exists $w_0=x_0+iy_0\in \Ha$ such that $\Im g(w)>A+\epsilon$, for some $\epsilon>0$. Since $\Im g'(w)>0$ for all $w\in \Ha$,  the curve $[x_0,+\infty) \ni r\mapsto \Im g(r+iy_0)$ is increasing, hence $\limsup_{r\to +\infty}  \Im g(r+iy_0)\geq A+\epsilon$, a contradiction. This proves that $g$ is starlike at infinity, as needed.

Assume now that $g$ is starlike at infinity. We want to show that \eqref{Eq:starlike-at-infinity-semiplan} holds.
First, we claim that

\medskip
{$(\ast)$} $g$  starlike at infinity implies \eqref{Eq:g-go-inside}.
\medskip

Assume  the claim {$(\ast)$} is true. Fix $y>0$. Then the function $\R\ni r\mapsto \Re g(y+ir)$ is either monotone or constant. Indeed, assume it is not constant, and that, by contradiction, there exist $r_0, r_1\in \R$ such that $\Re g(y+ir_0)=\Re g(y+ir_1)$. Since $g$ is univalent, $\Im g(y+ir_0)\neq \Im g(y+ir_1)$ and we can assume that $\Im g(y+ir_0)> \Im g(y+ir_1)$. Let $t:=\Im g(y+ir_0)-\Im g(y+ir_1)>0$. By \eqref{Eq:g-go-inside},
\[
g(y+ir_0)=g(y+ir_1)+it\in g(E^\Ha(\infty,y)),
\]
which is  a contradiction. Therefore, if not constant, the function $\R\ni r\mapsto \Re g(y+ir)$ is monotone. Moreover,  by \eqref{Eq:g-go-inside}, it is clear that $g$ maps $E^\Ha(\infty,y)$ onto the connected component of $\C\setminus g(\partial E^\Ha(\infty,y))$ which  contains the curve $(0,+\infty)\ni r\mapsto g(y+ir)$.  Since univalent maps  preserve orientation, this implies that, if not constant, the function $\R\ni r\mapsto \Re g(y+ir)$ is strictly decreasing. In particular, $\Im g'(y+ir)=-\frac{\de}{\de r}\Re g(y+ir)\geq 0$,  and \eqref{Eq:starlike-at-infinity-semiplan} holds.

Now, we  show that claim $(\ast)$ holds. Fix $y>0$. By Theorem \ref{Thm:estrella-infinito-koenigs}, there exists $\alpha\in \R$ such that $h+\alpha$ is the K\"onigs function of a non-elliptic semigroup $(\phi_t)$ in $\D$ with Denjoy-Wolff point $\sigma$.  In particular, for all $t>0$, taking into account that $\phi_t(z)=h^{-1}(h(z)+it))$ and the continuous Denjoy-Wolff  theorem, it holds
\[
h^{-1}\left(h(E(\sigma,\frac{1}{y})) +it\right)\subseteq E(\sigma,\frac{1}{y}).
\]
Hence
\begin{equation*}
\begin{split}
g(E^\Ha(\infty, y))+it&=h(C_\sigma^{-1}(E^\Ha(\infty, y)))+it=
h(E(\sigma,\frac{1}{y})) +it\\& \subseteq h(E(\sigma,\frac{1}{y}))=g(C_\sigma(E(\sigma,\frac{1}{y})))=g(E^\Ha(\infty, y)),
\end{split}
\end{equation*}
and claim $(\ast)$ is proved.

Finally,  it is clear that if $h$ is given by \eqref{Eq:starlike-inf-igual}, then $\Im [\overline\sigma (\sigma-z)^2 h'(z) ]=0$ for all $z\in \D$.
\end{proof}

From our previous construction and analysis of the K\"onigs function, we finally show how to generate the infinitesimal generator of a non-elliptic semigroup as well as how to deduce the Berkson-Porta decomposition theorem.

\begin{theorem} Let $(\phi_t)$ be a non-elliptic semigroup in $\D$ with Denjoy-Woll point $\tau\in\partial\D$. Then the function
\[
[0,+\infty)\times \D\ni (t,z) \longmapsto \phi_t(z)\in \D
\]
is real analytic and there exists a unique holomorphic function $G:\D\to \C$ such that
\begin{equation}\label{Eq:Existence inf generator}
\frac{\de \phi_t(z)}{\de
t}=G(\phi_t(z))\qquad z\in \D, \ t\in [0,+\infty).
\end{equation}
Moreover, let $h$ be the K\"onigs function of $(\phi_t)$.
\begin{enumerate}
\item For all $z\in \D$, it holds
\[
G(z)=\frac{i}{h'(z)}.
\]

\item If
$$
p(z):=\frac{i}{h'(z)(z-\tau)(\overline \tau z-1)}, \qquad z\in \D,
$$
then $G(z)=(z-\tau)(\overline{\tau}z-1)p(z)$ and $\Re p(z)\geq0$, for $z\in\D$.
\end{enumerate}
\end{theorem}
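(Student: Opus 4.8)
The plan is to extract everything from the defining functional equation of the K\"onigs function. Since $(\phi_t)$ is non-elliptic, Theorem~\ref{Thm:modelholo} gives $h\circ\phi_t=h+it$, and because $h$ is univalent its inverse $h^{-1}$ is holomorphic on the open set $h(\D)$. So I would begin by recording
\[
\phi_t(z)=h^{-1}(h(z)+it),\qquad z\in\D,\ t\geq0.
\]
Real analyticity is then immediate: the map $(t,z)\mapsto h(z)+it$ is real analytic (holomorphic in $z$, affine in $t$) and takes values in $h(\D)$, while $h^{-1}$ is holomorphic, hence real analytic, on $h(\D)$; the composition is therefore real analytic on $[0,+\infty)\times\D$.

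For the existence of $G$ and formula (1) I would differentiate the displayed identity in $t$, using $(h^{-1})'(w)=1/h'(h^{-1}(w))$. With $w=h(z)+it$, so that $h^{-1}(w)=\phi_t(z)$, this gives
\[
\frac{\de\phi_t(z)}{\de t}=\frac{i}{h'(\phi_t(z))},
\]
whence setting $G(w):=i/h'(w)$ — well defined and holomorphic because univalence of $h$ forces $h'$ to be nowhere zero — yields \eqref{Eq:Existence inf generator} together with $G(z)=i/h'(z)$. Uniqueness follows by evaluating \eqref{Eq:Existence inf generator} at $t=0$: since $\phi_0=\id$, any admissible $G$ must equal $\partial_t\phi_t(z)|_{t=0}$ and is thereby determined.

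For the Berkson--Porta decomposition (2), the definition of $p$ makes $G(z)=(z-\tau)(\overline{\tau}z-1)p(z)$ automatic, so the entire content is the sign of $\Re p$. The key observation will be the algebraic identity
\[
(z-\tau)(\overline{\tau}z-1)=\overline{\tau}(z-\tau)^2=\overline{\tau}(\tau-z)^2,
\]
valid since $|\tau|=1$, which rewrites $p(z)=i/W(z)$ with $W(z):=\overline{\tau}(\tau-z)^2h'(z)$. A one-line computation gives $\Re(i/W)=\Im W/|W|^2$, so that $\Re p(z)\geq0$ is equivalent to $\Im W(z)\geq0$, i.e.\ to $\Im[\overline{\tau}(\tau-z)^2h'(z)]\geq0$. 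This is precisely the starlike-at-infinity inequality \eqref{Eq:starlike-at-infinity} from Theorem~\ref{Thm:starlike-infinity}, which applies because $h$ is starlike at infinity with respect to $\tau$ by Theorem~\ref{Thm:estrella-infinito-koenigs}.

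I expect no serious obstacle here: once the formula $\phi_t=h^{-1}\circ(h+it)$ is in hand, real analyticity, formula (1), and uniqueness are routine, and the only substantive step is recognizing that the positivity of $\Re p$ coincides with the starlikeness inequality after the identity $(z-\tau)(\overline{\tau}z-1)=\overline{\tau}(\tau-z)^2$. The mild care required is to invoke univalence of $h$ so that $h'\neq0$, guaranteeing that both $G$ and $p$ are genuinely holomorphic on $\D$.
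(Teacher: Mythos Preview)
Your proof is correct and follows essentially the same route as the paper: both invert the K\"onigs relation to write $\phi_t=h^{-1}(h+it)$, differentiate to obtain $G=i/h'$, and then reduce $\Re p\geq0$ to the starlike-at-infinity inequality via Theorems~\ref{Thm:estrella-infinito-koenigs} and~\ref{Thm:starlike-infinity}. Your version is slightly more explicit---you spell out the identity $(z-\tau)(\overline{\tau}z-1)=\overline{\tau}(\tau-z)^2$, the uniqueness argument at $t=0$, and the nonvanishing of $h'$---but these are exactly the computations the paper leaves implicit.
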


\begin{proof} According to Theorem \ref{Thm:modelholo}, the canonical model of $(\phi_t)$ is $(\Omega, h, z+it)$, with either $\Omega=\strip_\rho$ for some $\rho>0$ or $\Omega=\Ha$ or $\Omega=\Ha^-$ or $\Omega=\C$. In particular, this means that $\phi_t(z)=h^{-1}(h(z)+it)$. This implies that the map $(t,z)\mapsto \phi_t(z)$ is real analytic in $[0,+\infty)\times \D$. Let $G(z):=\left.\frac{\partial \phi_t(z)}{\partial t} \right|_{t=0}=\frac{i}{h'(z)}$, $z\in \D$. This function is holomorphic and

$$
\frac{\de \phi_t(z)}{\de t}=\frac{i}{h'(\phi_t(z))}=G(\phi_t(z)), \quad z\in \D, \ t\geq 0.
$$
That is,  $(\phi_t)$ solves \eqref{Eq:Existence inf generator}. Moreover, the function $h$ is  starlike at infinity with respect to $\tau$ (see Theorem \ref{Thm:estrella-infinito-koenigs}). Therefore, by Theorem \ref{Thm:starlike-infinity},
$$
\Re p(z)=\Re \overline{p(z)}=\frac{\Im (h'(z)(z-\tau)(\overline \tau z-1))}{|h'(z)(z-\tau)(\overline \tau z-1)|^2} \geq 0, \qquad z\in \D.
$$
Hence $G(z)=(z-\tau)(\overline{\tau}z-1)p(z)$.
\end{proof}

\end{document}